\documentclass[12pt]{amsart}
\usepackage[utf8]{inputenc}
\usepackage{hyperref}
\usepackage[dvipsnames]{xcolor}
\usepackage{enumerate}
\usepackage{graphicx}
\usepackage{float}
\usepackage{multirow}
\usepackage{lipsum}
\usepackage{mathdots}
\usepackage{fancyhdr}
\usepackage{tikz}
\usepackage{tikz-cd}
\usepackage{setspace}
\usepackage{mathtools}
\usetikzlibrary{decorations.markings}

\newtheorem{theorem}{Theorem}
\newtheorem{axiom}{Axiom}
\newtheorem{lemma}{Lemma}

\newtheorem{definition}{Definition}
\newtheorem{remark}{Remark}

\newtheorem{conjecture}{Conjecture}

\title[Retro-synthetic Riemannian geometry I]{Retro-synthetic Riemannian geometry in Rocq I : Taylor series of a metric}
\author{Gabriella Clemente}
\email{gabriella.clemente@cnrs.fr}
\urladdr{https://sites.google.com/view/gclemente}
\address{Université Paris Cité, CNRS, IRIF, Paris, France}
\author{Hugo Herbelin}
\email{hugo.herbelin@inria.fr}
\urladdr{https://pauillac.inria.fr/~herbelin/}
\address{Université Paris Cité, INRIA, IRIF, Paris, France}
\author{Carlos Simpson}
\email{carlos.simpson@univ-cotedazur.fr}
\urladdr{https://math.univ-cotedazur.fr/u/carlos/}
\address{Université C{\^o}te d'azur, CNRS, LJAD, France}
\date{}

\begin{document}

\keywords{ Synthetic mathematics, Riemannian geometry, formalization, Rocq}

\begin{abstract}
We introduce a synthetic framework for doing local Riemannian geometry that is influenced by a reverse-engineering method for formalizing in Rocq. These new perspectives and techniques lead to a synthetic Taylor formula for a Riemannian metric in normal coordinates, which is the main contribution of this work.
\end{abstract}

\maketitle

\section*{Introduction}\label{intro}

In this article, we introduce a form of synthetic Riemannian geometry (SRG) as a subject of its own and experiment with a reverse-engineering technique for formalizing it. SRG is a classical $2^{nd}$ order theory that uses the arithmetic of the reals $\mathbb{R}$. Our synthetic results can be viewed as formal counterparts of local results of Riemannian geometry (RG). The formalization is done in Rocq \cite{Rocq} and is attached to the article. It is also available on GitHub \url{https://github.com/GabriellaClemente/differential-geometry}. A salient feature of our project is how mathematically natural the code is. That being said, our goal is not to formalize all of our work in detail, but rather to exhibit a method via which this could be done optimally with the help of automation plugins and/or AI-powered ITPs, such as LLM4Rocq \cite{LLM}. We will investigate the automation aspect of our formalization in the forthcoming part II of this article, which will be on a synthetic version of Weitzenb{\"o}ck's formula for differential $k$-forms. But the present work demonstrates that despite the lack of a complete RG library, it is still possible to formalize local RG in Rocq via the synthetic approximation we develop here. 

Our treatment of SRG was influenced by the dynamic process of formalization. In fact, the term ``retro-synthetic'' refers to the process of generating a synthetic theory based on a formalization carried out in stages of increasing complexity (i.e.\ a reverse-engineered formalization). The first stages conglomerate all of the content in axioms, the mid-iterations gradually unpack axioms into statements with proofs, and the final iteration is the maximally unpacked version, hence the closest to the synthetic pen-and-paper proof. 

The classical RG theorem of interest is stated below (see \cite{Lee}, and also \cite{normalC} for a higher order generalization). Here the euclidean norm on $\mathbb{R}^n$ will be denoted by $\| \cdot \|.$ 

\begin{theorem}{(Second order Taylor series of a Riemannian metric in normal coordinates)}\label{RTHM}
Let $(X,g)$ be a Riemannian manifold, $p \in X,$ and $(x)$ be normal coordinates centered at $p.$ Let $\delta_i^j$ be the Kronecker delta function, and $R_{iklj}$ be the Riemann curvature tensor. Then,

\[
g_{ij}(x)=\delta_i^j -\sum_{1\leq k,l \leq n} \frac{1}{3} R_{iklj} x_k x_l +O(\|x\|^3).
\]
\end{theorem}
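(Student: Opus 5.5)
Work in normal coordinates centered at $p$. By definition, for each $v\in\mathbb{R}^n$ the curve $\gamma(t)=tv$ is a geodesic, so
\[
\sum_{i,j}\Gamma^k_{ij}(tv)\,v_iv_j=0 \quad\text{for all } t .
\]
Expand $g_{ij}(x)=g_{ij}(0)+\sum_k \partial_k g_{ij}(0)x_k+\tfrac12\sum_{k,l}\partial_k\partial_l g_{ij}(0)x_kx_l+O(\|x\|^3)$ by the ordinary Taylor formula. Normal coordinates are built from an orthonormal frame at $p$, so $g_{ij}(0)=\delta_i^j$. The proof then reduces to computing the first and second derivatives of $g$ at $0$, and I will do this in three steps.

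\textbf{First order.} Set $t=0$ in the geodesic identity to get $\Gamma^k_{ij}(0)v_iv_j=0$ for all $v$. Since $\Gamma^k_{ij}$ is symmetric in $i,j$, polarization gives $\Gamma^k_{ij}(0)=0$. Metric compatibility says
\[
\partial_k g_{ij}=\sum_m\bigl(g_{mj}\Gamma^m_{ki}+g_{im}\Gamma^m_{kj}\bigr),
\]
so $\partial_k g_{ij}(0)=0$ and the linear term vanishes.

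\textbf{Second order.} Differentiate the geodesic identity once in $t$ at $t=0$ to get $\sum_{l,i,j}\partial_l\Gamma^k_{ij}(0)v_lv_iv_j=0$. Hence the full symmetrization of $\partial_l\Gamma^k_{ij}(0)$ in $(l,i,j)$ vanishes:
\[
\partial_l\Gamma^k_{ij}+\partial_i\Gamma^k_{jl}+\partial_j\Gamma^k_{li}=0\quad\text{at }0 .
\]
At $0$ the Christoffel symbols vanish, so the curvature is $R^k_{\ lij}=\partial_i\Gamma^k_{jl}-\partial_j\Gamma^k_{il}$, up to the sign convention fixed by the target formula. Combining this with the cyclic identity above, I solve for the derivatives of the Christoffel symbols:
\[
\partial_l\Gamma^k_{ij}(0)=-\tfrac13\bigl(R^k_{\ ilj}+R^k_{\ jli}\bigr).
\]
This is the standard trick: add the two curvature expressions obtained by exchanging $i$ and $j$, then use the symmetrized identity to replace the $\partial_i\Gamma$ and $\partial_j\Gamma$ terms by $-\partial_l\Gamma$.

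\textbf{Conclusion.} Differentiate the compatibility identity once more at $0$, using $\Gamma(0)=0$ and $g(0)=\delta$:
\[
\partial_l\partial_k g_{ij}(0)=\partial_l\Gamma^j_{ki}(0)+\partial_l\Gamma^i_{kj}(0).
\]
Substitute the formula for $\partial\Gamma$ and contract with $x_kx_l$. The symmetries of $R$ (antisymmetry in each pair and pair symmetry) collapse the four terms to $-\tfrac23\sum R_{iklj}x_kx_l$. Multiplying by the Taylor factor $\tfrac12$ gives the stated coefficient $-\tfrac13$.

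The main obstacle is this last index bookkeeping. One has to check that the sign and index conventions for $R_{iklj}$ agree with the paper's, and that after symmetrization in $k,l$ the terms combine to exactly $-\tfrac23$ rather than cancelling. I would settle this by first verifying the $n=2$ case on the round sphere, where $g=\delta-\tfrac13(\text{curvature})\cdot x^2$ is known. The $O(\|x\|^3)$ remainder is just smoothness of $g$, by Taylor's theorem with remainder on a compact ball.
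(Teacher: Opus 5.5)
Your outline is the paper's argument: the Taylor expansion, $g(p)=\delta$, $\Gamma(p)=0$ and $\partial g(p)=0$, the cyclic identity for $\partial\Gamma$ at $p$, curvature at $p$ written as $\partial\Gamma-\partial\Gamma$ (Lemma~\ref{Christocurv}), and $\partial_l\partial_k g_{ij}(0)=\partial_l\Gamma^j_{ki}(0)+\partial_l\Gamma^i_{kj}(0)$ from metric compatibility (Lemma~\ref{doubderiv}). There are two minor differences. You correctly derive the normal-coordinate facts from the radial geodesics, whereas the paper postulates them in Axioms~\ref{ax_1'} and~\ref{Christosum}. You also solve for $\partial\Gamma$, whereas the paper writes $R_{klij}(p)=-(\partial_k\Gamma^l_{ij}+2\partial_j\Gamma^l_{ik})(0)$ and combines it via $2R_{ikjl}=-(R_{kijl}+R_{ljik})$ in Lemma~\ref{lemfinal}.

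The real problem is the sign in your formula for $\partial_l\Gamma^k_{ij}(0)$, compounded by the fact that the final contraction it feeds is only asserted. Your displayed curvature formula is exactly the paper's convention: Axiom~\ref{Rnablax} and Lemma~\ref{Rlma} at $p$ give $R_{ijkl}(p)=\partial_i\Gamma^l_{jk}(0)-\partial_j\Gamma^l_{ik}(0)$, i.e.\ $R^d_{\ cab}(p)=R_{abcd}(p)$. With it, the trick you describe gives
\[
R^k_{\ ilj}+R^k_{\ jli}=2\partial_l\Gamma^k_{ij}-\partial_j\Gamma^k_{li}-\partial_i\Gamma^k_{lj}=3\,\partial_l\Gamma^k_{ij}\quad\text{at }0,
\]
so the coefficient is $+\tfrac13$, not $-\tfrac13$. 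You cannot defer this to ``the convention fixed by the target formula'': the sign of the final coefficient is precisely what the theorem asserts. Pushing your $-\tfrac13$ through in this convention ends at $g_{ij}=\delta_i^j+\tfrac13\sum R_{iklj}x_kx_l+\dots$. That fails on the unit sphere, where $R_{abcd}=\delta_{bc}\delta_{ad}-\delta_{ac}\delta_{bd}$ and $g_{ij}=\delta_{ij}-\tfrac13(\|x\|^2\delta_{ij}-x_ix_j)+\dots$.

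With $+\tfrac13$ the bookkeeping closes. First, $\partial_l\partial_kg_{ij}(0)=\tfrac13(R_{likj}+R_{lkij}+R_{ljki}+R_{lkji})$. The second and fourth terms cancel by antisymmetry in the last pair. Contract with the symmetric $x_kx_l$ and relabel $k\leftrightarrow l$; then $R_{likj}$ and $R_{ljki}=R_{kilj}$ each contribute $-\sum R_{iklj}x_kx_l$. The total is $-\tfrac23\sum R_{iklj}x_kx_l$, hence $-\tfrac13$ after the Taylor factor $\tfrac12$.
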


Our synthesis of this theorem is founded on a minimalistic definition of Riemannian manifold that is nonetheless powerful enough to articulate most local results of RG and render all proofs self-contained. The key observation is that in RG, local results are independent of topology. Our approach too is topology-free. Riemannian manifolds are replaced by pairs $(M,g)$, where $M$ is a set and $g$ is an $n\times n$ invertible matrix of functions $g_{ij} \colon M \to \mathbb{R}$, for $1\leq i,j \leq n$. We call any such pair $(M,g)$ a synthetic local model of Riemannian geometry or synthetic local model (SLM) when the context is clear. For any $n\in \mathbb{N}$, we denote the set of all such SLMs by $RM(n).$ 

The synthetic version of Theorem \ref{RTHM} that we prove and (partially) formalize is stated below. 

\begin{theorem}{(Synthetic Taylor series of $g$)}\label{RTHM_SM22}
For any $(M,g) \in RM(n)$ and $p_0 \in M,$ in normal coordinates $(U_{p_0},x)$, we have that 

\[g_{ij} \circ x^{-1}=\delta_i^j -\sum_{1\leq k,l \leq n} \frac{1}{3} R_{iklj} (p_0) x_k x_l +O(\|x\|^3).\] 
\end{theorem}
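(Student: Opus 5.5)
Working in normal coordinates $(U_{p_0},x)$, I set $h_{ij}=g_{ij}\circ x^{-1}$ and read this as a statement about the second-order Taylor polynomial of $h_{ij}$ at $0$. In the synthetic setting the framework supplies the Christoffel symbols $\Gamma^k_{ij}$, the curvature $R_{iklj}$, and the defining property of normal coordinates: the radial lines $t\mapsto tv$ are geodesics. I plan to establish three facts at $0$ and then substitute them into Taylor's formula with an $O(\|x\|^3)$ remainder, which I assume the framework provides for functions with enough derivatives. The three facts are $h_{ij}(0)=\delta_i^j$, $\partial_k h_{ij}(0)=0$, and
\[
\partial_k\partial_l h_{ij}(0)=-\tfrac13\bigl(R_{iklj}+R_{ilkj}\bigr)(p_0).
\]
After symmetrization in $k,l$, the quadratic term is then $\tfrac12\sum_{k,l}\partial_k\partial_l h_{ij}(0)\,x_kx_l=-\tfrac13\sum_{k,l}R_{iklj}(p_0)x_kx_l$.

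The zeroth-order fact comes from the orthonormality built into the definition of normal coordinates. For the first-order fact, I substitute $x(t)=tv$ into the geodesic equation, which gives $\sum_{i,j}\Gamma^k_{ij}(tv)v_iv_j=0$ for all $t$ and $v$. At $t=0$, polarization gives $\Gamma^k_{ij}(0)=0$. Since $\partial_k g_{ij}=\sum_m \bigl(g_{mj}\Gamma^m_{ki}+g_{im}\Gamma^m_{kj}\bigr)$, it follows that $\partial_k h_{ij}(0)=0$.

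The second-order fact is the main step. First I differentiate $\sum_{i,j}\Gamma^k_{ij}(tv)v_iv_j=0$ in $t$ at $t=0$. This gives $\sum\partial_l\Gamma^k_{ij}(0)v_lv_iv_j=0$, so the full symmetrization of $\partial_l\Gamma^k_{ij}(0)$ in $(l,i,j)$ vanishes. Using the symmetry $\Gamma^k_{ij}=\Gamma^k_{ji}$, this reads $\partial_l\Gamma^k_{ij}+\partial_i\Gamma^k_{jl}+\partial_j\Gamma^k_{li}=0$ at $0$. Since $\Gamma(0)=0$, the quadratic terms in the curvature formula vanish at $0$, so there $R^k{}_{lij}=\partial_i\Gamma^k_{jl}-\partial_j\Gamma^k_{il}$, with the sign convention matching the paper's. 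Combining the cyclic identity with this expression, I solve linearly for $\partial_l\Gamma^k_{ij}(0)=-\tfrac13(R^k{}_{ilj}+R^k{}_{jli})(0)$, up to the paper's index convention. Finally I differentiate $\partial_k g_{ij}=\sum_m \bigl(g_{mj}\Gamma^m_{ki}+g_{im}\Gamma^m_{kj}\bigr)$ once more at $0$. Because $\Gamma(0)=0$ and $g(0)=\delta$, this gives $\partial_l\partial_k h_{ij}(0)=\partial_l\Gamma^j_{ki}(0)+\partial_l\Gamma^i_{kj}(0)$. Substituting the previous formula and using the curvature symmetries (antisymmetry in pairs and pair exchange) collapses this to the claimed $-\tfrac13(R_{iklj}+R_{ilkj})$.

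I expect the main obstacle to be the index bookkeeping: matching the paper's sign and ordering convention for $R_{iklj}$ and invoking exactly the curvature symmetries it has available (in the synthetic setting these may themselves be axioms or earlier lemmas). I would first fix the convention by checking the final formula against the round sphere, and then run the linear algebra. A secondary issue is making sure the differentiation of $\Gamma(tv)$ along rays and the Taylor remainder are legitimately available in the synthetic framework, which I take from the earlier setup.
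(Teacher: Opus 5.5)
Your argument is correct, and its algebraic core matches the paper's. In both, $\Gamma(p_0)=0$ kills the quadratic terms of Lemma~\ref{Rlma}, and the cyclic identity for $\partial\Gamma$ at $p_0$ is combined with $R=\partial\Gamma-\partial\Gamma$. The compatibility formula of Axiom~\ref{pargax} then gives $\partial_l\partial_k h_{ij}(0)=\partial_l\Gamma^j_{ki}(0)+\partial_l\Gamma^i_{kj}(0)$ (Lemma~\ref{doubderiv}), and the curvature symmetries plus the Taylor axiom (Axiom~\ref{ax_2}) finish.

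Two things differ. First, the paper has no geodesics, no rays $t\mapsto tv$ and no chain rule. Normal coordinates are \emph{defined} by Axioms~\ref{ax_1'} and~\ref{Christosum}, which directly postulate $g_{ij}(p_0)=\delta^j_i$, $\partial_k h_{ij}(0)=0$, $\Gamma(p_0)=0$ and the vanishing cyclic sum. Your polarization argument and your differentiation of the geodesic equation are therefore the classical justification of those axioms. They are not steps available in the synthetic framework, whose point is that the proof is pure equational rewriting. In exchange, your route shows, for instance, that item~3 of Axiom~\ref{ax_1'} is redundant given item~4 and Axiom~\ref{pargax}.

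Second, the paper never inverts the relation to express $\partial\Gamma$ in terms of $R$. Lemma~\ref{Christocurv_lem} writes $R_{klij}(p_0)=-(\partial_k\Gamma^l_{ij}+2\partial_j\Gamma^l_{ik})(x(p_0))$. Lemma~\ref{lemfinal} then proves only the contracted identity $3\sum_{k,l}\partial_k\partial_l h_{ij}(0)\,x_kx_l=2\sum_{k,l}R_{ikjl}(p_0)\,x_kx_l$, by relabelling $k\leftrightarrow l$ under the symmetric sum. That suffices for the theorem and suits rewriting. Your uncontracted formula $\partial_k\partial_l h_{ij}(0)=-\tfrac13(R_{iklj}+R_{ilkj})(p_0)$ is the sharper pointwise statement.

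There is one sign slip to fix. Use your stated convention $R^k{}_{lij}=\partial_i\Gamma^k_{jl}-\partial_j\Gamma^k_{il}$ at $0$. Then $R^k{}_{ilj}+R^k{}_{jli}=2\partial_l\Gamma^k_{ij}-\partial_j\Gamma^k_{li}-\partial_i\Gamma^k_{lj}$, which equals $3\,\partial_l\Gamma^k_{ij}$ by the cyclic identity. So $\partial_l\Gamma^k_{ij}(0)=+\tfrac13(R^k{}_{ilj}+R^k{}_{jli})(0)$. With the minus sign you wrote, your last step would produce $+\tfrac13(R_{iklj}+R_{ilkj})$. With the corrected sign it collapses to your final formula, which is correct: antisymmetry in the last pair cancels two terms, and antisymmetry in the first pair plus pair exchange handles the rest.
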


In addition to SLMs, we introduce the notion of synthetic model (SM) of a theorem, which essentially condenses all of the analysis used in a proof of the theorem, c.f.\ Definition~\ref{SM_def}. In our setting, the axioms absorb the existence of normal coordinates on a Riemannian manifold, which is a fact of global analysis, and reconstitute it as the defining feature of a point. For us, a point of $(M,g) \in RM(n)$ is an element of the set $M$ that carries the synthetic analogue of normal coordinates. So clearly, we are only able to tackle formal aspects of RG.

We believe that our retro-synthesis method could also be useful for generalizing classical results, and formalizing them. In a future work, we would like to apply our techniques to obtain generalizations of Theorem \ref{RTHM} to other $G$-structures, which will lead to new results in differential geometry (c.f.\ Conjecture \ref{Gconjecture}). 

The organization is as follows. Section \ref{1} introduces SMs and the theory of SLMs. Section \ref{1.1} lays the foundations for the proof of the main theorem, which is Theorem \ref{RTHM_SM22}, and SRG as a whole. Section \ref{2} introduces the category of rings of differential operators, which provides an alternative framework for the foundations of SRG. Section \ref{2.1} contains the proof of Theorem \ref{RTHM_SM22}. Section \ref{3} is about the retro-engineering technique we used to synthesize and formalize. We also discuss there some related theoretical considerations. Section \ref{4} is on future research directions.\\ 

\noindent
\textbf{Acknowledgements.} 
This work was supported by the European Research Council through the Horizon ERC Synergy grant 101167526 (MALINCA)

\section{Synthetic theory}\label{1}
Synthetic mathematics is much more organic than what it is sometimes made out to be. In fact, it is standard mathematical practice to reason synthetically in the first stages of internalization of any theory. We produce minimalist preliminary proofs of theorems, where the proofs of auxiliary results are omitted, and slowly fill in details until the main proof is complete. Synthetic mathematics does exactly that modulo the freedom to abstract assumptions to a setting that often diverges from classical mathematics, e.g.\ synthetic differential geometry. A standard reference for the subject is \cite{Kock}. See \cite{RBGC} for recent progress in the area and its formalization in Lean.

Here we define the concept of synthetic model of a theorem, and illustrate the use of such models for formalization.

\begin{definition}\label{SM_def}
A synthetic model (SM) of a theorem $\mathbf{A}$ is a formally analogous theorem $\mathbf{A}'$ and a finite list of axioms comprising the non-formal statements that go into proving $\mathbf{A}$, from which $\mathbf{A}'$ can be proven by a finite series of reasoning steps. 
\end{definition}

In our case, the reasoning steps will all be equational rewriting. Moreover, the statement of $\mathbf{A}'$ (Theorem \ref{RTHM_SM22}) is very close to that of $\mathbf{A}$ (Theorem \ref{RTHM}). However, Definition \ref{SM_def} is broad enough to include more complex cases that we will study in the future, where $\mathbf{A}$ and $\mathbf{A}'$ could be less alike, and where other proof rules may be needed.

The formal analogy alluded to in the above definition is achieved by establishing a synthetic framework. We take as a rough guide the description in \cite{synthetic}. In the context of RG, the non-formal statements correspond to the analysis and topology at play. 

Given an SM, we call the number of axioms in the list its size, and the (total) minimal number of reasoning steps its depth. We are using the word {\it total} in a literal sense; e.g.\ in our case, we must count all rewriting steps, including those nested in the proofs of lemmas. Although depth is a well-defined quantity as the minimum exists, it is not always trivial to compute it. 

Let us define the objects we will work with, which play the role of Riemannian manifolds, and specify the rules they obey.

Let $M$ be a set. Let $g_{ij} \colon M \to \mathbb{R}$ and $g^{ij} \colon M \to \mathbb{R}$ be functions indexed by pairs of natural numbers $i,j$ belonging to a set $S \subset \mathbb{N}$ such that for all $i,j \in S$, $g_{ij} = g_{ji}$, $g^{ij}=g^{ji}$ and \[\sum_{k \in S} g^{ik} g_{kj} =\delta^i_j.\] Our shorthand notation for these functions is $g \coloneqq (g_{ij})_{i,j \in S},$ respectively $g^{-1} \coloneqq (g^{ij})_{i,j \in S},$ i.e.\ we arrange functions into square symmetric matrices of (finite or infinite) dimension $|S|$. So $gg^{-1} = g^{-1} g =I$, where $I$ is the $|S| \times |S|$ identity matrix. We are mostly interested in the case when $S=\{1,\dots,n\}$. Here, the set of all functions $M \to \mathbb{R}$ will be denoted by $\mathbb{R}^M$. 

\begin{definition}
Let $n\in \mathbb{N}$. A pair $(M,g)$ consisting of a set $M$ and a matrix function $g \in \operatorname{GL}_n (\mathbb{R}^M)$, together with the structures $\Gamma$, $\nabla$ and $R$ introduced below, is called a synthetic local model (SLM) of a Riemannian manifold of dimension $n$. The collection of all such pairs $(M,g)$ will be denoted by $RM(n).$
\end{definition}

We suppose that to any $(M,g) \in RM(n)$, we can uniquely associate the tensors 

\begin{itemize}
    
    \item $\Gamma$ with components $\Gamma^k_{ij} \colon M \to \mathbb{R}$ for all $1\leq i,j, k\leq n$, and 
    
    \item $R$ with components $R_{ijkl} \colon M \to \mathbb{R}$ for all $1\leq i,j,k,l \leq n$.
\end{itemize}

In addition, we have the {\it synthetic Levi-Civita connection} $\nabla$ of $g$, which is a collection of operators $\nabla_i$, where $1\leq i \leq n$, defined from the {\it synthetic Christoffel symbols} $\Gamma^k_{ij}$ by the formulas of Axioms 
\ref{nablax} and \ref{CLeirule} below. The tensor $R$ is called the {\it synthetic Riemann curvature tensor} of $g$ and it is given by $\nabla$ via Axiom \ref{Rnablax}. Much like in classical RG, in our synthetic model $\nabla$, $\Gamma$ and $R$ are closely related to each other.

Since $\mathbb{R}^M$ is a commutative unital $\mathbb{R}$-algebra, it is in particular a ring. For any $(M,g) \in RM(n)$, any subset $V \subset M$ and any invertible on its image function $y=(y_1,\dots,y_n) \colon V \to \mathbb{R}^n$, consider the free associative non-commutative algebra over $\mathbb{R}^M$ generated by the set of symbols 

\begin{equation*}
    \begin{split}
Symb_y &\coloneqq \Big\{\frac{\partial}{\partial y_i}, \nabla_{\frac{\partial}{\partial y_i}}, g \Big| 1\leq i \leq n \Big\}\\
&\cup \{f \circ y^{-1} \mid f\in \mathbb{R}^M \},
\end{split}
\end{equation*}
which we denote as 
\[
       \mathcal{R}_y \coloneqq \mathbb{R}^M \langle Symb_y \rangle. 
\]

Note that $\{f \circ y^{-1} \mid f\in \mathbb{R}^M \}$ includes  notably $g_{ij} \circ y^{-1}$ and $\Gamma^k_{ij} \circ y^{-1}$.
    
Initially, we suppose that all statements are made in the above defined associative algebra ($\mathcal{R}_y$) and that all members of the algebra (i.e.\ all words) are definable. Later on, this algebra will be refined. This definability supposition is reflected in Rocq by taking the symbols in $Symb_y$ as parameters. Indeed, under our assumptions, we know that we can form the following words among many others : $g \frac{\partial}{\partial y_i} \frac{\partial}{\partial y_j}$; $\nabla_{\frac{\partial}{\partial y_i}} \frac{\partial}{\partial y_j}$; $\frac{\partial (f \circ y^{-1})}{\partial y_k}$ 
for all $f\colon M \to \mathbb{R}$, so in particular $\frac{\partial (g_{ij} \circ y^{-1})}{\partial y_k}$, and also $\frac{\partial}{\partial y_l} \frac{\partial (g_{ij} \circ y^{-1})}{\partial y_k}$; and $\nabla_{\frac{\partial}{\partial y_i}} (f \circ y^{-1})$ for all $f\colon M \to \mathbb{R}$. Moreover, they are all definable.

\begin{remark}\label{der_meaning}
We will follow the notational convention \[\frac{\partial^2 (g_{ij} \circ y^{-1})}{\partial y_l \partial y_k} \coloneqq \frac{\partial}{\partial y_l} \frac{\partial (g_{ij} \circ y^{-1})}{\partial y_k}.\] Although we do not use that level of generality here, the convention extends to $m>2$. Moreover, we assume that for all $f\colon M \to \mathbb{R}$, $\frac{\partial (f \circ y^{-1})}{\partial y_k}$ has the same type as $f \circ y^{-1}$, i.e.\ it is a function $y(V) \to \mathbb{R}$.
\end{remark}

As will be discussed below, we will add relations to the algebra $\mathcal{R}_y$ representing the relations, $\mathbf{R},$ that are universally true among symbols. We get a quotient algebra 
$\overline{\mathcal{R}}_y= \mathcal{R}_y/I $ dividing by the ideal $I$ generated by the set $\mathbf{R}$,
and we will work within this algebra. Recall that we have the algebra presentation 

\[
\overline{\mathcal{R}}_y = \langle Symb_y \mid \mathbf{R} \rangle.
\]

We note that it could be interesting to look at the similar algebra defined over other fields including for example fields of finite characteristic, which would give a ``local Riemannian geometry'' in finite characteristic. 

\section{Foundations of SRG}\label{1.1}

Here we discuss the axiomatics that appear in the proof of Theorem \ref{RTHM_SM22}. We isolate the axioms and results that follow because they are important in SMs of various local RG theorems that will be the subject of future works.

\begin{remark}\label{words}
All axioms, lemmas and theorems are stated within the algebra $\mathcal{R}_y$, for suitable $y$. This means that the equations in these statements are written using the algebraic operations available in $\mathcal{R}_y$. Concretely, we use the $\mathbb{R}^M$-module structure, which gives addition and scalar multiplication, and the associative multiplication (concatenation of words) in $\mathcal{R}_y$. 

For instance, on the right hand side of the formula in Axiom~\ref{Leirule}, we are using the operations of concatenation and addition of $\mathcal{R}_y$.

In Axiom \ref{CLeirule}, what might look like multiplication in the stated equation is not ; it is concatenation of words. Namely, on the left hand side, we are concatenating word $\nabla_{\frac{\partial}{\partial y_i}}$ with word $(f\circ y^{-1})\frac{\partial}{\partial y_j}$, while on the right hand side we are concatenating $\nabla_{\frac{\partial}{\partial y_i}} (f\circ y^{-1})$ with $\frac{\partial}{\partial y_j}$, $f\circ y^{-1}$ with $\nabla_{\frac{\partial}{\partial y_i}} \frac{\partial}{\partial y_j}$, and then adding these terms in $\mathcal{R}_y.$
\end{remark}

\begin{axiom}{(Leibniz rule)}\label{Leirule}
For any $(M,g) \in RM(n)$, any functions $f_1,f_2 \colon M \to \mathbb{R}$, any subset $V \subset M$ and any invertible on its image function $y=(y_1,\dots,y_n) \colon V \to \mathbb{R}^n$, 

\begin{equation*}
    \begin{split}
        &\frac{\partial ((f_1 f_2) \circ y^{-1})}{\partial y_k} = \frac{\partial ((f_1 \circ y^{-1}) (f_2 \circ y^{-1}))}{\partial y_k}\\
        &= \frac{\partial (f_1 \circ y^{-1})}{\partial y_k} (f_2 \circ y^{-1})+ (f_1 \circ y^{-1}) \frac{\partial (f_2 \circ y^{-1})}{\partial y_k}.
    \end{split}
\end{equation*}

\end{axiom}

\begin{axiom}\label{gaxiom}
For any $(M,g) \in RM(n)$, any subset $V \subset M$, any invertible on its image function $y=(y_1,\dots,y_n) \colon V \to \mathbb{R}^n$, and all $1\leq i,j \leq n$, we have that \[g_{ij} \circ y^{-1} = g \frac{\partial}{\partial y_i} \frac{\partial}{\partial y_j}.\]
\end{axiom}

\begin{axiom}{(Synthetic smoothness of $g$)}\label{ax_2}
For any $(M,g) \in RM(n),$ $p_0 \in M,$ subset $V_{p_0} \ni p_0$ of $M$, and invertible on its image function $y=(y_1,\dots,y_n) \colon V_{p_0} \to \mathbb{R}^n$, we have that 

\begin{equation*}
\begin{split}
& g_{ij} \circ y^{-1} =g_{ij} (p_0)+\sum_{1 \leq k \leq n} \big(\frac{\partial (g_{ij} \circ y^{-1})}{\partial y_k} (y(p_0))\big) (y_k-y_k(p_0))\\
& +\sum_{1 \leq k,l \leq n} \frac{1}{2}\big(\frac{\partial^2 (g_{ij} \circ y^{-1})}{\partial y_k \partial y_l}(y(p_0))\big) (y_k-y_k(p_0))(y_l-y_l(p_0))\\
& + O(\|y-y(p_0)\|^3).
\end{split}
\end{equation*}
\end{axiom}

Some clarifications are in order. We need to use Remarks \ref{der_meaning} and \ref{words} to parse the formula of Axiom \ref{ax_2}. The formula is a sum of $4$ words in $\mathcal{R}_y$ that are also functions of the same type as $g_{ij} \circ y^{-1}$. We view the first term $g_{ij} (p_0)$ as a constant function. The generic form of the second term is that of a function $\sum_{1\leq k\leq n} C_k (f_k \circ y^{-1}),$ where for each $k$, $C_k \in \mathbb{R}$ is a constant and $f_k \colon V \to \mathbb{R}$. We use Remark \ref{der_meaning} to set $C_k = \frac{\partial (g_{ij} \circ y^{-1})}{\partial y_k} (y(p_0))$, and we independently set $f_k = y_k-y_k(p_0)$. The third term can be interpreted similarly, i.e.\ define constants $C_{kl} = \frac{1}{2} \frac{\partial^2 (g_{ij} \circ y^{-1})}{\partial y_k \partial y_l}(y(p_0))$ and functions $f_{kl} = (y_k-y_k(p_0))(y_l-y_l(p_0))$. The fourth term is a function $y(V) \to \mathbb{R}$ that is $O(\|y-y(p_0)\|^3)$, meaning that we can factor out a collection of degree $3$ monomials.

\begin{axiom}\label{nablax}
For any $(M,g) \in RM(n)$, any subset $V \subset M$, any invertible on its image function $y=(y_1,\dots,y_n) \colon V \to \mathbb{R}^n$, and all $1\leq i,j \leq n$, we have that \[\nabla_{\frac{\partial}{\partial y_i}} \frac{\partial}{\partial y_j} = \sum_{1\leq k\leq n} (\Gamma^k_{ij} \circ y^{-1}) \frac{\partial}{\partial y_k}. \]
\end{axiom}

Axiom \ref{nablax} says that $\Gamma$ completely determines $\nabla,$ while Axiom \ref{Chriscurvature} below says that $\Gamma$ is completely determined by (derivatives of) $g.$

\begin{axiom}\label{Chriscurvature}
For any $(M,g) \in RM(n)$, any subset $V \subset M$, any invertible on its image function $y=(y_1,\dots,y_n) \colon V \to \mathbb{R}^n$, and all $1\leq i,j, k\leq n$, we have that

\begin{equation*}
    \begin{split}
       \Gamma^k_{ij} \circ y^{-1} &= \sum_{1 \leq m \leq n} \frac{1}{2} (g^{km} \circ y^{-1})\Big(\frac{\partial (g_{mi} \circ y^{-1})}{\partial y_j} + \frac{\partial (g_{mj} \circ y^{-1})}{\partial y_i} \\
       &- \frac{\partial (g_{ij} \circ y^{-1})}{\partial y_m}\Big). 
    \end{split}
\end{equation*}

\end{axiom}

\begin{lemma}\label{symChris}
For all $1\leq i,j, k\leq n$, we have that $\Gamma^k_{ij} = \Gamma^k_{ji}$.
\end{lemma}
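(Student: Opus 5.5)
The plan is to work entirely with Axiom~\ref{Chriscurvature} together with the symmetry hypothesis $g_{ij}=g_{ji}$ built into the definition of $RM(n)$. Since $\Gamma^k_{ij}$ is a function $M\to\mathbb{R}$ and the axiom only describes $\Gamma^k_{ij}\circ y^{-1}$, I will first prove the symmetry after composing with $y^{-1}$. Then I will transport it back to $M$.

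For the first step, fix $V\subset M$ and an invertible-on-its-image $y\colon V\to\mathbb{R}^n$. Write out Axiom~\ref{Chriscurvature} twice, once for the triple $(k,i,j)$ and once for $(k,j,i)$. For $\Gamma^k_{ji}\circ y^{-1}$ the bracket inside the sum reads
\[
\frac{\partial (g_{mj} \circ y^{-1})}{\partial y_i} + \frac{\partial (g_{mi} \circ y^{-1})}{\partial y_j} - \frac{\partial (g_{ji} \circ y^{-1})}{\partial y_m}.
\]
The first two summands are those of the $(k,i,j)$ bracket in the opposite order, so commutativity of addition in $\mathcal{R}_y$ identifies them. For the third summand, I rewrite $g_{ji}=g_{ij}$ as functions on $M$, hence $g_{ji}\circ y^{-1}=g_{ij}\circ y^{-1}$ as symbols, hence their $\partial/\partial y_m$-derivatives agree. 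The prefactor $\frac12(g^{km}\circ y^{-1})$ is literally the same in both expressions, so summing over $m$ gives $\Gamma^k_{ij}\circ y^{-1}=\Gamma^k_{ji}\circ y^{-1}$. Each of these steps is a single equational rewrite, in the spirit of the paper's depth count.

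The second step, which I expect to be the only real obstacle, is to remove the $\circ y^{-1}$. The approach is to choose $V$ and $y$ so that composing with $y^{-1}$ is injective on the relevant functions. The cleanest choice is to take $V=\{p\}$ for an arbitrary $p\in M$ with any injective $y$, or simply $V=M$ if an injective $y\colon M\to\mathbb{R}^n$ is available. Then $(\Gamma^k_{ij}\circ y^{-1})(y(p))=\Gamma^k_{ij}(p)$, and likewise for $\Gamma^k_{ji}$, so pointwise equality follows for every $p$, giving $\Gamma^k_{ij}=\Gamma^k_{ji}$ in $\mathbb{R}^M$.

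If the framework is instead read purely at the level of words in $\mathcal{R}_y$, where evaluation is not available, I would state the lemma as the equality of symbols $\Gamma^k_{ij}\circ y^{-1}=\Gamma^k_{ji}\circ y^{-1}$ for every $y$. I would regard that as the intended content, since $\Gamma$ is only ever used through such symbols in the subsequent axioms.
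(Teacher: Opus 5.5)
Your proof is correct and takes essentially the paper's approach: the paper's one-line proof is commutativity of addition (swapping the first two summands of the bracket in Axiom~\ref{Chriscurvature}) plus $g_{ij}=g_{ji}$ for the third summand, which is your first step. Your second step, passing from $\Gamma^k_{ij}\circ y^{-1}=\Gamma^k_{ji}\circ y^{-1}$ to equality on $M$ via $V=\{p\}$, is a detail the paper leaves implicit; the symbol-level identity from your first step is what Lemma~\ref{lemfinal} actually uses.
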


\begin{proof}
Use commutativity of addition, and the symmetry of $g.$
\end{proof}

\begin{axiom}{(Combined Leibniz rule)}\label{CLeirule}
For any $(M,g) \in RM(n)$, any function $f\colon M \to \mathbb{R}$, any subset $V \subset M$ and any invertible on its image function $y=(y_1,\dots,y_n) \colon V \to \mathbb{R}^n$, 
\[\nabla_{\frac{\partial}{\partial y_i}} \big((f\circ y^{-1})\frac{\partial}{\partial y_j}\big) = \big(\nabla_{\frac{\partial}{\partial y_i}} (f\circ y^{-1})\big) \frac{\partial}{\partial y_j} + (f\circ y^{-1})  \nabla_{\frac{\partial}{\partial y_i}} \frac{\partial}{\partial y_j}.\]
\end{axiom}

\begin{axiom}\label{intpartial}
For any $(M,g) \in RM(n)$, any function $f\colon M \to \mathbb{R}$, any subset $V \subset M$ and any invertible on its image function $y=(y_1,\dots,y_n) \colon V \to \mathbb{R}^n$, 
\[\nabla_{\frac{\partial}{\partial y_i}} (f\circ y^{-1}) = \frac{\partial (f\circ y^{-1})}{\partial y_i}.\]
\end{axiom}

\begin{lemma}\label{LRwithChrisymbol}
For any $(M,g) \in RM(n)$, any function $f\colon M \to \mathbb{R}$, any subset $V \subset M$ and any invertible on its image function $y=(y_1,\dots,y_n) \colon V \to \mathbb{R}^n$, 

\begin{equation*}
    \begin{split}
       \nabla_{\frac{\partial}{\partial y_i}} \big((f\circ y^{-1})\frac{\partial}{\partial y_j}\big) &= \frac{\partial (f\circ y^{-1})}{\partial y_i} \frac{\partial}{\partial y_j} + (f\circ y^{-1}) \sum_{1\leq k\leq n} (\Gamma^k_{ij} \circ y^{-1})\frac{\partial}{\partial y_k}. 
    \end{split}
\end{equation*}

\end{lemma}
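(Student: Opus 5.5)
The plan is a direct equational rewrite in the algebra $\mathcal{R}_y$, using the three axioms about $\nabla$ in order. Fix $(M,g)\in RM(n)$, $f\colon M\to\mathbb{R}$, $V\subset M$ and $y$ as in the statement.

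First, I apply the Combined Leibniz rule (Axiom~\ref{CLeirule}) to the left hand side. This rewrites it as the sum of the concatenations
\[
\big(\nabla_{\frac{\partial}{\partial y_i}} (f\circ y^{-1})\big) \frac{\partial}{\partial y_j} \quad\text{and}\quad (f\circ y^{-1})\, \nabla_{\frac{\partial}{\partial y_i}} \frac{\partial}{\partial y_j}.
\]
Second, in the first summand I rewrite the subword $\nabla_{\frac{\partial}{\partial y_i}}(f\circ y^{-1})$ as $\frac{\partial (f\circ y^{-1})}{\partial y_i}$ using Axiom~\ref{intpartial}. Third, in the second summand I rewrite the subword $\nabla_{\frac{\partial}{\partial y_i}} \frac{\partial}{\partial y_j}$ as $\sum_{k} (\Gamma^k_{ij}\circ y^{-1})\frac{\partial}{\partial y_k}$ using Axiom~\ref{nablax}. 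The result is exactly the right hand side of Lemma~\ref{LRwithChrisymbol}.

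The only point needing care is that these rewrites are applied to subwords inside concatenations and sums. This is legitimate because the relations are imposed through an ideal, so equality in $\mathcal{R}_y$ (or its quotient) is a congruence for concatenation and addition. Once that is accepted, there is no real obstacle; the lemma amounts to three rewriting steps. Distributing $(f\circ y^{-1})$ over the finite sum is not even required, since the statement keeps the sum bracketed as written.
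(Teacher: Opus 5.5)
Your proposal is correct and matches the paper's proof, which simply cites Axioms~\ref{CLeirule}, \ref{nablax} and \ref{intpartial}; you apply the same three rewrites in the natural order. Your remark that rewriting subwords is legitimate because the relations generate an ideal, so equality is a congruence for concatenation and addition, is a justification the paper leaves implicit.
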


\begin{proof}
A consequence of Axioms \ref{CLeirule}, \ref{nablax} and \ref{intpartial}. 
\end{proof}

\begin{lemma}\label{gdisr}
For any $(M,g) \in RM(n)$, any functions $f_1,\dots,f_n \colon M \to \mathbb{R}$, any subset $V \subset M$ and any invertible on its image function $y=(y_1,\dots,y_n) \colon V \to \mathbb{R}^n$, 

\[g\big(\sum_{1\leq i\leq n} f_i \frac{\partial}{\partial y_i}\big)\frac{\partial}{\partial y_j} = \sum_{1\leq i\leq n} f_i (g_{ij} \circ y^{-1}).\]
\end{lemma}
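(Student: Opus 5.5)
The identity is a direct rewriting inside the algebra $\mathcal{R}_y$; it needs only the ring axioms of the free algebra and Axiom~\ref{gaxiom}. The idea is to distribute $g$ over the sum, move the scalar coefficients out, and then identify each remaining word $g\frac{\partial}{\partial y_i}\frac{\partial}{\partial y_j}$ with $g_{ij}\circ y^{-1}$.

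\textbf{Step 1 (distribute).} The word $g\big(\sum_{i} f_i \frac{\partial}{\partial y_i}\big)\frac{\partial}{\partial y_j}$ is a concatenation of three factors, the middle one a finite sum. By bilinearity of multiplication in the associative algebra $\mathcal{R}_y$ (Remark~\ref{words}), it equals
\[
\sum_{1\leq i\leq n} g\, f_i \frac{\partial}{\partial y_i}\frac{\partial}{\partial y_j}.
\]

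\textbf{Step 2 (pull out the scalars).} Each coefficient $f_i$ belongs to the ground ring $\mathbb{R}^M$. More precisely, it enters through its image $f_i\circ y^{-1}$ in $Symb_y$, and the statement implicitly identifies the two. In the free $\mathbb{R}^M$-algebra $\mathcal{R}_y$, scalars from the ground ring commute with every word. Hence $g\, f_i \frac{\partial}{\partial y_i}\frac{\partial}{\partial y_j} = f_i\, g\frac{\partial}{\partial y_i}\frac{\partial}{\partial y_j}$, using the $\mathbb{R}^M$-module structure.

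\textbf{Step 3 (apply the axiom).} Axiom~\ref{gaxiom} rewrites $g\frac{\partial}{\partial y_i}\frac{\partial}{\partial y_j}$ as $g_{ij}\circ y^{-1}$. This gives $\sum_i f_i (g_{ij}\circ y^{-1})$, as claimed.

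\textbf{Main obstacle.} Step 2 is the only delicate point. If $f_i$ is read as the symbol $f_i\circ y^{-1}\in Symb_y$ rather than as a ground-ring scalar, then commuting it past $g$ is not a relation of the free algebra. In that case I would appeal to the intended $\mathbb{R}^M$-linearity of $g$ in its first slot, i.e.\ include $g\,(f\circ y^{-1}) = (f\circ y^{-1})\,g$ among the relations $\mathbf{R}$ defining $\overline{\mathcal{R}}_y$. My plan is to adopt the ground-ring interpretation, which is consistent with the hypothesis $f_i\colon M\to\mathbb{R}$ in the statement. With that interpretation the proof is three rewriting steps, one per sum index in each application.
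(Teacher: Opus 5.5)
Your proposal is correct and takes the same route as the paper. The paper's proof cites only ``the underlying algebraic structure'' of $\mathcal{R}_y$ (bilinearity, plus $\mathbb{R}^M$-scalars acting centrally) together with Axiom~\ref{gaxiom}, which is exactly your distribute, pull-out-scalars, rewrite sequence. One correction: under the ground-ring reading you adopt, drop the aside in Step~2 that identifies $f_i$ with $f_i\circ y^{-1}$, since that identification is precisely what would stop $f_i$ commuting with $g$ for free.
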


\begin{proof}
Consequence of the underlying algebraic structure we assumed to have, and Axiom \ref{gaxiom}.
\end{proof}

\begin{axiom}\label{Rnablax}
For any $(M,g) \in RM(n)$, any subset $V \subset M$, any invertible on its image function $y=(y_1,\dots,y_n) \colon V \to \mathbb{R}^n$, and all $1\leq i,j,k,l \leq n$, 
\[R_{ijkl} \circ y^{-1} = g\big(\nabla_{\frac{\partial}{\partial y_i}} \nabla_{\frac{\partial}{\partial y_j}} \frac{\partial}{\partial y_k} - \nabla_{\frac{\partial}{\partial y_j}} \nabla_{\frac{\partial}{\partial y_i}} \frac{\partial}{\partial y_k} \big)\frac{\partial}{\partial y_l}.\]
\end{axiom}

\begin{lemma}\label{Rlma}
For any $(M,g) \in RM(n)$, any subset $V \subset M$, any invertible on its image function $y=(y_1,\dots,y_n) \colon V \to \mathbb{R}^n$, and all $1\leq i,j,k,l \leq n$, we have that

\begin{equation*}
    \begin{split}
        R_{ijkl} \circ y^{-1} &= \sum_{1\leq v\leq n} g_{vl} \Big[\frac{\partial (\Gamma^v_{jk} \circ y^{-1})}{\partial y_i} - \frac{\partial (\Gamma^v_{ik} \circ y^{-1})}{\partial y_j} \\
        & + \sum_{1\leq m\leq n} (\Gamma^m_{jk} \Gamma^v_{im} 
        - \Gamma^m_{ik} \Gamma^v_{jm}) \circ y^{-1}\Big].
    \end{split}
\end{equation*}

\end{lemma}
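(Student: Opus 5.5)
The plan is to start from Axiom~\ref{Rnablax} and reduce the two iterated covariant derivatives to linear combinations of the words $\frac{\partial}{\partial y_v}$ with coefficients in $\{f\circ y^{-1}\}$. Then Lemma~\ref{gdisr} converts $g(\cdot)\frac{\partial}{\partial y_l}$ into a sum against $g_{vl}\circ y^{-1}$. Everything is equational rewriting in $\overline{\mathcal{R}}_y$, and I will treat the two terms symmetrically, doing $\nabla_{\frac{\partial}{\partial y_i}}\nabla_{\frac{\partial}{\partial y_j}}\frac{\partial}{\partial y_k}$ and obtaining the other by swapping $i$ and $j$.

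\textbf{Step 1.} Axiom~\ref{nablax} rewrites the inner word as $\sum_{m}(\Gamma^m_{jk}\circ y^{-1})\frac{\partial}{\partial y_m}$. I then need $\nabla_{\frac{\partial}{\partial y_i}}$ to distribute over this finite sum. In the free algebra, concatenation is distributive over addition, so $\nabla_{\frac{\partial}{\partial y_i}}\sum_m w_m=\sum_m \nabla_{\frac{\partial}{\partial y_i}} w_m$ is available directly.

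\textbf{Step 2.} I apply Lemma~\ref{LRwithChrisymbol} to each summand with $f=\Gamma^m_{jk}$ and $j$ replaced by $m$. This gives
\[\frac{\partial(\Gamma^m_{jk}\circ y^{-1})}{\partial y_i}\frac{\partial}{\partial y_m}+(\Gamma^m_{jk}\circ y^{-1})\sum_v(\Gamma^v_{im}\circ y^{-1})\frac{\partial}{\partial y_v}.\]
I then rename the dummy index $m$ to $v$ in the first sum. In the second double sum, I use the algebra structure to combine the coefficient functions into $(\Gamma^m_{jk}\Gamma^v_{im})\circ y^{-1}$ and exchange the order of summation. Composition commutes with pointwise product, which is essentially the content of Axiom~\ref{Leirule}'s first equality, or simply of $\mathbb{R}^M$ being a ring. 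The result is
\[\sum_v\Big[\frac{\partial(\Gamma^v_{jk}\circ y^{-1})}{\partial y_i}+\sum_m(\Gamma^m_{jk}\Gamma^v_{im})\circ y^{-1}\Big]\frac{\partial}{\partial y_v}.\]

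\textbf{Step 3.} I subtract the $i\leftrightarrow j$ version, collect the coefficients of each $\frac{\partial}{\partial y_v}$, and apply Lemma~\ref{gdisr} with $f_v$ equal to the bracket. This yields $\sum_v(\text{bracket})(g_{vl}\circ y^{-1})$. Commuting the scalar factor to the front gives the displayed formula, where $g_{vl}$ is read as $g_{vl}\circ y^{-1}$.

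The main obstacle is bookkeeping rather than mathematics, and it sits in Step 2. There I must justify that the coefficient $\frac{\partial(\Gamma\circ y^{-1})}{\partial y_i}$, together with products of $\Gamma$'s, may be treated as elements of the coefficient ring, so that Lemma~\ref{gdisr} applies. Remark~\ref{der_meaning} guarantees the derivative is again a function of type $y(V)\to\mathbb{R}$. I would invoke it explicitly, and I would keep the $\mathbb{R}^M$-linearity of $g(\cdot)\frac{\partial}{\partial y_l}$ separate from the non-commutative concatenation.
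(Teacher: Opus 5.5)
Your proposal is correct and follows the paper's proof essentially step for step: Axiom~\ref{nablax} plus distributivity, then Lemma~\ref{LRwithChrisymbol} termwise, then reindexing and collecting into $\sum_v f_v\frac{\partial}{\partial y_v}$, and finally Axiom~\ref{Rnablax} combined with Lemma~\ref{gdisr}. You are also more careful than the paper on two points it passes over silently: justifying via Remark~\ref{der_meaning} that the derivative coefficients are admissible $f_v$ in Lemma~\ref{gdisr}, and reading $g_{vl}$ in the statement as $g_{vl}\circ y^{-1}$.
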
 

\begin{proof}
First, observe that 

\begin{equation*}
    \begin{split}
   \nabla_{\frac{\partial}{\partial y_i}} \nabla_{\frac{\partial}{\partial y_j}} \frac{\partial}{\partial y_k} - \nabla_{\frac{\partial}{\partial y_j}} \nabla_{\frac{\partial}{\partial y_i}} \frac{\partial}{\partial y_k} &=_{ax.\ref{nablax}} \nabla_{\frac{\partial}{\partial y_i}} \big(\sum_{1 \leq m\leq n} (\Gamma^m_{jk} \circ y^{-1}) \frac{\partial}{\partial y_m} \big) 
   - \\
   &\nabla_{\frac{\partial}{\partial y_j}} \big(\sum_{1 \leq m\leq n} (\Gamma^m_{ik} \circ y^{-1}) \frac{\partial}{\partial y_m} \big)\\
   &= \sum_{1 \leq m\leq n} \Big[\nabla_{\frac{\partial}{\partial y_i}} \big((\Gamma^m_{jk}\circ y^{-1}) \frac{\partial}{\partial y_m}\big)-\\ &\nabla_{\frac{\partial}{\partial y_j}} \big((\Gamma^m_{ik}\circ y^{-1}) \frac{\partial}{\partial y_m}\big)\Big]\\
   &=_{lem.\ref{LRwithChrisymbol}} \sum_{1 \leq m\leq n} \Big[\frac{\partial (\Gamma^m_{jk} \circ y^{-1})}{\partial y_i} \frac{\partial}{\partial y_m} +\\ 
   &\sum_{1\leq v\leq n} (\Gamma^m_{jk} \Gamma^v_{im} \circ y^{-1}) \frac{\partial}{\partial y_v}- \frac{\partial (\Gamma^m_{ik} \circ y^{-1})}{\partial y_j} \frac{\partial}{\partial y_m}-\\ &\sum_{1\leq v\leq n} (\Gamma^m_{ik} \Gamma^v_{jm}\circ y^{-1}) \frac{\partial}{\partial y_v} \Big]\\
   &= \sum_{1 \leq v\leq n} \Big[\frac{\partial (\Gamma^v_{jk} \circ y^{-1})}{\partial y_i} 
   - \frac{\partial (\Gamma^v_{ik} \circ y^{-1})}{\partial y_j}+ \\
   & \sum_{1\leq m\leq n} (\Gamma^m_{jk} \Gamma^v_{im} - \Gamma^m_{ik} \Gamma^v_{jm})\circ y^{-1}\Big] \frac{\partial}{\partial y_v}.
    \end{split}
\end{equation*}

Put \[f_v = \frac{\partial (\Gamma^v_{jk} \circ y^{-1})}{\partial y_i} - \frac{\partial (\Gamma^v_{ik} \circ y^{-1})}{\partial y_j}+ \sum_{1\leq m\leq n} (\Gamma^m_{jk} \Gamma^v_{im} - \Gamma^m_{ik} \Gamma^v_{jm})\circ y^{-1}\] so that \[\nabla_{\frac{\partial}{\partial y_i}} \nabla_{\frac{\partial}{\partial y_j}} \frac{\partial}{\partial y_k} - \nabla_{\frac{\partial}{\partial y_j}} \nabla_{\frac{\partial}{\partial y_i}} \frac{\partial}{\partial y_k} = \sum_{1 \leq v\leq n} f_v \frac{\partial}{\partial y_v}.\] Now observe that 

\begin{equation*}
    \begin{split}
        R_{ijkl} \circ y^{-1} &=_{ax.\ref{Rnablax}} g \Big( \sum_{1 \leq v\leq n} f_v \frac{\partial}{\partial y_v} \Big) \frac{\partial}{\partial y_l} \\
   &=_{lem.\ref{gdisr}} \sum_{1 \leq v\leq n} f_v (g_{vl} \circ y^{-1}) \\
   &= \sum_{1\leq v\leq n} (g_{vl} \circ y^{-1}) \Big[\frac{\partial (\Gamma^v_{jk} \circ y^{-1})}{\partial y_i} - \frac{\partial (\Gamma^v_{ik} \circ y^{-1})}{\partial y_j} \\
        & + \sum_{1\leq m\leq n} (\Gamma^m_{jk} \Gamma^v_{im} 
        - \Gamma^m_{ik} \Gamma^v_{jm}) \circ y^{-1}\Big].
    \end{split}
\end{equation*}
\end{proof}

\begin{axiom}{(Synthetic Bianchi identity)}\label{BI}
For all $1\leq i,j,k,l \leq n,$ \[R_{ijkl} + R_{jkil} + R_{kijl} = 0.\]
\end{axiom}

\begin{axiom}\label{last2in}
For all $1\leq i,j,k,l \leq n,$ $R_{ijkl} = -R_{ijlk}.$
\end{axiom}

\begin{lemma}\label{R_sym}
For all $1\leq i,j,k,l \leq n$, we have that 
\begin{enumerate}
    \item $R_{ijkl} = -R_{jikl},$ and
    \item $R_{ijkl} = R_{klij}$.
\end{enumerate}
\end{lemma}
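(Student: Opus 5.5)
Part (1) follows from the defining formula of Axiom~\ref{Rnablax}, and part (2) follows by the classical combinatorial argument from the synthetic Bianchi identity (Axiom~\ref{BI}) together with the two antisymmetries: (1) and Axiom~\ref{last2in}. No analysis is involved; the whole proof is equational rewriting in $\overline{\mathcal{R}}_y$ and in $\mathbb{R}^M$.

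\textbf{Part (1).} Fix a subset $V\subset M$ and a function $y$ invertible on its image, as in Axiom~\ref{Rnablax}.
\begin{itemize}
\item Write $A_{ij}=\nabla_{\frac{\partial}{\partial y_i}} \nabla_{\frac{\partial}{\partial y_j}} \frac{\partial}{\partial y_k}$. Swapping $i$ and $j$ turns the word $A_{ij}-A_{ji}$ into $A_{ji}-A_{ij}=-(A_{ij}-A_{ji})$. This uses only the $\mathbb{R}^M$-module structure of $\mathcal{R}_y$.
\item Concatenation is bilinear, so
\[
g\,(-(A_{ij}-A_{ji}))\,\frac{\partial}{\partial y_l} = -\,g\,(A_{ij}-A_{ji})\,\frac{\partial}{\partial y_l}.
\]
Hence $R_{jikl}\circ y^{-1} = -R_{ijkl}\circ y^{-1}$.
\item As a cross-check, the expanded formula of Lemma~\ref{Rlma} is visibly antisymmetric under $i\leftrightarrow j$. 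The two derivative terms swap with a sign change, and the quadratic terms $\Gamma^m_{jk}\Gamma^v_{im}-\Gamma^m_{ik}\Gamma^v_{jm}$ likewise change sign.
\item To pass from $R\circ y^{-1}$ to $R$ itself, precompose both sides with $y$, using that $y$ is invertible on its image. This gives the identity on $V$. If one wants equality on all of $M$, apply it with a chart domain $V$ containing any given point.
\end{itemize}
I expect this last transfer step to be the only delicate point, since it depends on how the paper treats equality of functions on $M$ versus words in $\mathcal{R}_y$. It can be absorbed by reading the statement as an identity of the corresponding functions $y(V)\to\mathbb{R}$.

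\textbf{Part (2).} Write $B(i,j,k,l)$ for the identity $R_{ijkl}+R_{jkil}+R_{kijl}=0$ of Axiom~\ref{BI}.
\begin{itemize}
\item Form the combination
\[
B(i,j,k,l)-B(j,k,l,i)-B(k,l,i,j)+B(l,i,j,k)=0.
\]
\item Normalize each of the twelve terms using the antisymmetry in the first pair (part (1)) and in the last pair (Axiom~\ref{last2in}).
\item All terms cancel in pairs except $2R_{ijkl}-2R_{klij}$.
\item Since we work over $\mathbb{R}$, we may divide by $2$, giving $R_{ijkl}=R_{klij}$.
\end{itemize}
The main obstacle here is purely bookkeeping: one must check the twelve terms carefully so that each cancellation uses the right antisymmetry. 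If the signs of this particular combination do not work out, I would fall back to the standard alternative: write the four cyclic Bianchi identities obtained by cycling which index is held in last position, sum all four, and again reduce using (1) and Axiom~\ref{last2in}.
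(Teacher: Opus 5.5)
Your proposal matches the paper's proof: part (1) comes directly from Axiom~\ref{Rnablax} via the antisymmetry of the commutator word, and part (2) is the standard rewriting of the Bianchi identity (Axiom~\ref{BI}) using part (1) and Axiom~\ref{last2in}. One bookkeeping correction: in your combination $B(i,j,k,l)-B(j,k,l,i)-B(k,l,i,j)+B(l,i,j,k)$, the terms $R_{ijkl}$ and $R_{klij}$ themselves cancel, and what survives is $2R_{jkil}-2R_{iljk}=0$; renaming $(j,k,i,l)\mapsto(i,j,k,l)$ turns this into $R_{ijkl}=R_{klij}$, so the argument goes through without the fallback.
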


\begin{proof}
For $(1)$, simply use Axiom \ref{Rnablax}. For $(2)$, we use the Bianchi identity, $(1)$ and Axiom \ref{last2in}.
The calculations are similar to the above. 
\end{proof}

Let us observe that our initial algebraic context has been narrowed down to the infinitely generated subalgebra of $\mathcal{R}_y$ with presentation \[\langle Symb_y \mid \mbox{ax., lem.} \rangle,\] where ax.\ and lem.\ stand for the equations appearing in all axioms and lemmas of this subsection.

\section{Rings of differential operators for SRG}\label{2}

We discuss an alternative, and perhaps more elegant point of view for the foundations of SRG, which is based on the theory of $D$-modules. We also outline how to recast our foundational development from sections \ref{1} -- \ref{1.1}. Some advantages include the reduction in size of the SM of Theorem \ref{RTHM}, and the freedom to do local differential geometry over any base ring (e.g.\ a field of finite characteristic). 

Fix a base commutative ring $k$. Define a {\it ring of differential operators} (over $k$) to be an associative $k$-algebra $R$ together with a filtration $R=\bigcup _{k\geq 0} R_k$ with $k\subset R_0$, satisfying $R_iR_j \subset R_{i+j}$, such that 
\[
Gr(R) \coloneqq \bigoplus _{i\geq 0} R_i/R_{i-1}
\]
is commutative (here we set $R_{-1}:= \{ 0\}$). Note that $R_0$ is a commutative subalgebra of $R$. A {\it morphism} $\phi  : R\rightarrow R'$ is a morphism of $k$-algebras such that $\phi (R_i)\subset R'_i$. For any $j \geq -1$, we call $R_j$ the subspace of differential operators {\it of order $\leq j$}. 

The category ${\bf RDO}$ of rings of differential operators admits all colimits so we may define rings of differential operators by generators and relations. For our purposes here we will be interested in the ones that are {\it generated in orders $\leq 1$}, that is to say $R$ is generated by $R_1$ and the filtration is generated in the sense that $R_m$ is the subspace of products of length $\leq m$ of elements of $R_1$. We also have a Leibniz rule valid in any $R\in {\bf RDO}$, which applies specifically to $R_1$ acting on $R_0$ by derivations.

If $A$ is a commutative $k$-algebra then $A\langle \partial _1,\ldots , \partial _k\rangle$
is defined to be the object of ${\bf RDO}$ generated by $A$ in degree $0$ and the $\partial _i$ in degree $1$, subject to the relation $[\partial _i,\partial _j]=\partial _i \partial _j - \partial _j \partial _i = 0$. 

For {\it local RG} in dimension $n$, let $k={\mathbb R}$, let 
${\mathcal A}$ be the commutative ${\mathbb R}$-algebra generated by $n(n+1)/2$ elements denoted $g_{ij}$ and  $n(n+1)/2$ elements denoted $g^{ij}$ (with the conventions $g_{ij}=g_{ji}$ and $g^{ij} = g^{ji}$) subject to the relations
\[
\sum _{j=1}^n g^{ij} g_{jk} = \delta^i_k.
\]
Then consider the object $RG$ of ${\bf RDO}$ defined by 
\[
RG \coloneqq {\mathcal A}\langle \partial _1,\ldots , \partial _n\rangle .
\]
Thus, concretely, $RG$ is generated by elements $g_{ij}$, $g^{ij}$ and $\partial _i$ 
subject to the relations $\sum _{j=1}^n g^{ij} g_{jk} = \delta^i_k$, 
$\partial _i \partial _j = \partial _j \partial _i$, plus the relations imposed by the conditions of ${\bf RDO}$. The main objects entering into local Riemannian geometry may be seen as elements of the ring $RG$, when expressed using index notation in terms of local coordinates. This  includes for example the Christoffel symbols given by the formula of Axiom \ref{Chriscurvature}
\[
\Gamma ^i_{jk} = \frac{1}{2}\sum _m g^{im}\left(
\partial_k(g_{mj}) + \partial_j(g_{mk}) - \partial_m(g_{jk})
\right) ,
\]
as well as the curvature (Lemma \ref{Rlma}) deduced from the Christoffel symbols. Note that in order for the present framework to be fully consistent with the statements of Axiom \ref{Chriscurvature} and Lemma \ref{Rlma}, all occurrences of $y$ have been removed. Conceptually, we can replace $\mathcal{R}_y$, where $y$ is an arbitrary function of the necessary kind, by $RG$ to obtain a practically equivalent foundation of SRG.

\section{Synthetic model of Taylor's theorem for Riemannian metrics}\label{2.1}

This section is dedicated to the proof of Theorem \ref{RTHM_SM22}. As mentioned in the introduction, a point on $(M,g)$ is defined by normal coordinates, which are given as in Axiom \ref{ax_1'} and Axiom \ref{Christosum} below. More precisely, a point is defined by the specific properties of classical normal coordinates that are used in the proof of Theorem \ref{RTHM_SM22}. But we do not claim that those properties alone are enough to define normal coordinates. Also, we should point out that the properties used may change based on the result to be proven. In this way, SMs of theorems are dynamic. Note that in particular, for us, points carry metric data.

\begin{axiom}{(Synthetic normal coordinates -- I)}\label{ax_1'}
For any $(M,g) \in RM(n)$ and $p_0 \in M,$ there is a subset of $M$, $U_{p_0} \ni p_0$, and an invertible on its image function $x=(x_1,\dots,x_n) \colon U_{p_0} \to \mathbb{R}^n$ such that

\begin{enumerate}
    \item $x(p_0)=0,$
    \item for all $1 \leq i,j \leq n$, $g_{ij} (p_0) =\delta^j_i,$ 
    \item for all $1 \leq i,j,k \leq n$, $\frac{\partial (g_{ij} \circ x^{-1})}{\partial x_k} (x(p_0))=0,$ and
    \item for all $1 \leq i,j,k \leq n$, $(\Gamma^k_{ij} \circ x^{-1})(x(p_0)) = \Gamma^k_{ij} (p_0) = 0.$
\end{enumerate}
\end{axiom}

A pair $(U_{p_0},x)$ of subset $U_{p_0}$ and function $x$ as described in Axiom \ref{ax_1'} is called a normal coordinate chart. Note that \textsf{$\forall p \in U_{p_0}$} in Rocq codifies the same data as $x$ in the article. We discuss normal coordinates in the ${\bf RDO}$ sense at the end of this section. 

\begin{lemma}\label{Christocurv}
For any $(M,g) \in RM(n)$ and $p_0 \in M,$ we have, in normal coordinates $(U_{p_0},x)$, that \[R_{klij} (p_0) = \frac{\partial (\Gamma_{jk}^l \circ x^{-1})}{\partial x_i}(x(p_0)) - \frac{\partial (\Gamma_{ik}^l \circ x^{-1})}{\partial x_j}(x(p_0)).\]
\end{lemma}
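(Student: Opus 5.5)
The plan is to specialize Lemma \ref{Rlma} to the normal coordinate chart $(U_{p_0},x)$ of Axiom \ref{ax_1'}, evaluate the resulting identity at the point $x(p_0)$, and then use the symmetry of Lemma \ref{R_sym}(2) to move the indices into the order appearing in the statement.

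\emph{Step 1: specialize Lemma \ref{Rlma}.} Take $V=U_{p_0}$ and $y=x$ in Lemma \ref{Rlma}. This gives, for all $i,j,k,l$,
\[
R_{ijkl}\circ x^{-1} = \sum_{1\leq v\leq n} (g_{vl}\circ x^{-1})\Big[\frac{\partial (\Gamma^v_{jk} \circ x^{-1})}{\partial x_i} - \frac{\partial (\Gamma^v_{ik} \circ x^{-1})}{\partial x_j} + \sum_{1\leq m\leq n} (\Gamma^m_{jk} \Gamma^v_{im} - \Gamma^m_{ik} \Gamma^v_{jm}) \circ x^{-1}\Big].
\]
Here I read the factor $g_{vl}$ in the lemma as $g_{vl}\circ x^{-1}$, which is what its proof via Lemma \ref{gdisr} actually produces.

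\emph{Step 2: evaluate at $x(p_0)$.} Both sides are functions $x(U_{p_0})\to\mathbb{R}$, built from functions of that type by sums and pointwise products; by Remark \ref{der_meaning}, the derivative terms also have this type. So evaluation at $x(p_0)$ distributes over the sums and products.
\begin{itemize}
\item The left side becomes $(R_{ijkl}\circ x^{-1})(x(p_0))=R_{ijkl}(p_0)$.
\item By Axiom \ref{ax_1'}(4), every product $(\Gamma^m_{jk}\Gamma^v_{im})\circ x^{-1}$ and $(\Gamma^m_{ik}\Gamma^v_{jm})\circ x^{-1}$ vanishes at $x(p_0)$, since each is a product of two factors that are $0$ there. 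This kills the quadratic terms.
\item By Axiom \ref{ax_1'}(2), $(g_{vl}\circ x^{-1})(x(p_0))=g_{vl}(p_0)=\delta^l_v$, so the sum over $v$ collapses to the single term $v=l$.
\end{itemize}
The result is
\[
R_{ijkl}(p_0)=\frac{\partial (\Gamma^l_{jk}\circ x^{-1})}{\partial x_i}(x(p_0))-\frac{\partial (\Gamma^l_{ik}\circ x^{-1})}{\partial x_j}(x(p_0)).
\]

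\emph{Step 3: reorder indices.} Apply Lemma \ref{R_sym}(2), $R_{klij}=R_{ijkl}$, evaluated at $p_0$. The left side becomes $R_{klij}(p_0)$, and the right side is exactly the expression in the statement.

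The step I expect to be the main obstacle is Step 2. The framework is an algebra of words in $\mathcal{R}_y$, so one must justify that evaluation at $x(p_0)$ behaves as a ring homomorphism on the relevant function-valued words. That is, it must respect addition and products of functions of the form $f\circ x^{-1}$ and their partial derivatives, and it must give $(f\circ x^{-1})(x(p_0))=f(p_0)$ using only that $x$ is invertible on its image. I would handle this by restricting attention to the commutative subalgebra of function-valued words, where evaluation is pointwise. In the Rocq formalization this would correspond to instantiating the universally quantified point at $p_0$ and rewriting with Axiom \ref{ax_1'}(2),(4). Everything else is equational rewriting with earlier results.
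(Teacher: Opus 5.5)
Your proposal is correct and is essentially the paper's proof. The paper's proof simply applies parts 2 and 4 of Axiom~\ref{ax_1'} to Lemma~\ref{Rlma} in the normal chart. Your Step 3 fills a step that this one-line proof leaves implicit: evaluating Lemma~\ref{Rlma} at $p_0$ gives $R_{ijkl}(p_0)$ rather than $R_{klij}(p_0)$, so the pair symmetry of Lemma~\ref{R_sym}(2) really is needed to match the statement.
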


\begin{proof}
Apply $2$ and $4$ of Axiom \ref{ax_1'} to Lemma \ref{Rlma}.
\end{proof}

\begin{axiom}{(Synthetic normal coordinates -- II)}\label{Christosum}
For any $(M,g) \in RM(n)$ and $p_0 \in M,$ we have, in normal coordinates $(U_{p_0},x)$, that 
\[
\frac{\partial (\Gamma_{ij}^k \circ x^{-1})}{\partial x_l}(x(p_0))+\frac{\partial (\Gamma_{il}^k \circ x^{-1})}{\partial x_j}(x(p_0))+\frac{\partial (\Gamma_{jl}^k \circ x^{-1})}{\partial x_i}(x(p_0))=0.
\]
\end{axiom}

\begin{lemma}\label{Christocurv_lem}
For any $(M,g) \in RM(n)$ and $p_0 \in M,$ we have, in normal coordinates $(U_{p_0},x)$, that \[R_{klij} (p_0) = - \Big(\frac{\partial (\Gamma_{ij}^l \circ x^{-1})}{\partial x_k} +2 \frac{\partial (\Gamma_{ik}^l \circ x^{-1})}{\partial x_j}\Big)(x(p_0)).\]
\end{lemma}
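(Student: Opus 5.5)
The plan is to derive the claimed formula by combining Lemma~\ref{Christocurv} with Axiom~\ref{Christosum}, using only equational rewriting in the ring of coefficients (real numbers obtained by evaluating at $x(p_0)$). To lighten notation, write $\partial_a \Gamma^c_{de}$ for $\frac{\partial (\Gamma^c_{de}\circ x^{-1})}{\partial x_a}(x(p_0))$.

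First, Lemma~\ref{Christocurv} gives
\[
R_{klij}(p_0) = \partial_i \Gamma^l_{jk} - \partial_j \Gamma^l_{ik}.
\]
Next, I will instantiate Axiom~\ref{Christosum} with its free indices renamed: the upper index $k$ becomes $l$, and the lower index $l$ becomes $k$, keeping $i,j$. This yields
\[
\partial_k \Gamma^l_{ij} + \partial_j \Gamma^l_{ik} + \partial_i \Gamma^l_{jk} = 0 .
\]
Solving for $\partial_i \Gamma^l_{jk}$ gives
\[
\partial_i \Gamma^l_{jk} = -\partial_k \Gamma^l_{ij} - \partial_j \Gamma^l_{ik}.
\]

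Substituting this into the expression from Lemma~\ref{Christocurv} gives
\[
R_{klij}(p_0) = -\partial_k \Gamma^l_{ij} - \partial_j \Gamma^l_{ik} - \partial_j \Gamma^l_{ik} = -\bigl(\partial_k \Gamma^l_{ij} + 2\,\partial_j \Gamma^l_{ik}\bigr),
\]
which is the statement. Pulling the evaluation at $x(p_0)$ outside the sum, as written in the lemma, is just the pointwise convention for sums of functions.

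I do not expect a real obstacle. The only point needing care is the index bookkeeping in the instantiation of Axiom~\ref{Christosum}: the renaming must match the term $\partial_i\Gamma^k_{jl}$ there with $\partial_i\Gamma^l_{jk}$ here, including the order of the lower indices. If a mismatch in the order of lower indices appeared, I would repair it with Lemma~\ref{symChris}, using $\Gamma^l_{jk}=\Gamma^l_{kj}$. In the formalization, this step is the one where an explicit instance of the axiom with the permuted indices must be supplied before the linear rewriting (e.g.\ by a ring/linear-arithmetic tactic) closes the goal.
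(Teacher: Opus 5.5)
Your proposal is correct and is essentially the paper's own proof: instantiate Axiom~\ref{Christosum} with $k\leftrightarrow l$ to get $-\partial_i\Gamma^l_{jk}=\partial_k\Gamma^l_{ij}+\partial_j\Gamma^l_{ik}$ at $x(p_0)$, then substitute into Lemma~\ref{Christocurv}. Your index bookkeeping checks out, and Lemma~\ref{symChris} is not needed because the lower indices already match.
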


\begin{proof}
According to Lemma \ref{Christocurv}, we have that \[R_{klij} (p_0) = \frac{\partial (\Gamma_{jk}^l \circ x^{-1})}{\partial x_i}(x(p_0)) - \frac{\partial (\Gamma_{ik}^l \circ x^{-1})}{\partial x_j}(x(p_0)).\] According to Axiom \ref{Christosum}, upon exchanging $k \leftrightarrow l$, we have that 

\begin{equation}\label{eqC}
    -\frac{\partial (\Gamma_{jk}^l \circ x^{-1})}{\partial x_i}(x(p_0))=\frac{\partial (\Gamma_{ij}^l \circ x^{-1})}{\partial x_k}(x(p_0))+\frac{\partial (\Gamma_{ik}^l \circ x^{-1})}{\partial x_j}(x(p_0)).
\end{equation}

Hence,

\begin{equation*}
    \begin{split}
        R_{klij} (p_0) &=_{lem. \ref{Christocurv}} \frac{\partial (\Gamma_{jk}^l \circ x^{-1})}{\partial x_i}(x(p_0)) - \frac{\partial (\Gamma_{ik}^l \circ x^{-1})}{\partial x_j}(x(p_0)) \\
        &=-\Big(\frac{\partial (\Gamma_{ik}^l \circ x^{-1})}{\partial x_j}(x(p_0))-\frac{\partial (\Gamma_{jk}^l \circ x^{-1})}{\partial x_i}(x(p_0))\Big)\\
        &=_{(\ref{eqC})} -\Big(\frac{\partial (\Gamma_{ik}^l \circ x^{-1})}{\partial x_j}(x(p_0)) + \frac{\partial (\Gamma_{ij}^l \circ x^{-1})}{\partial x_k}(x(p_0))+\\
        & \frac{\partial (\Gamma_{ik}^l \circ x^{-1})}{\partial x_j}(x(p_0))\Big)\\
        &=-\Big(\frac{\partial (\Gamma_{ij}^l \circ x^{-1})}{\partial x_k}(x(p_0))+2\frac{\partial (\Gamma_{ik}^l \circ x^{-1})}{\partial x_j}(x(p_0))\Big).
    \end{split}
\end{equation*}
\end{proof}

\begin{axiom}\label{pargax}
For any $(M,g) \in RM(n)$, any subset $V \subset M$ and any invertible on its image function $y=(y_1,\dots,y_n) \colon V \to \mathbb{R}^n$, 
\[
\frac{\partial (g_{ij} \circ y^{-1})}{\partial y_k} = \sum_{1\leq m \leq n} \big[(g_{mj}\circ y^{-1}) (\Gamma^m_{ki} \circ y^{-1}) + (g_{im} \circ y^{-1}) (\Gamma^m_{kj} \circ y^{-1}) \big].
\]
\end{axiom}

\begin{lemma}\label{doubderiv}
For any $(M,g) \in RM(n)$ and $p_0 \in M,$ we have, in normal coordinates $(U_{p_0},x)$, that
\[\frac{\partial^2 (g_{ij}\circ x^{-1})}{\partial x_l x_k} (x(p_0))= \big(\frac{\partial (\Gamma_{ki}^j \circ x^{-1})}{\partial x_l}  +\frac{\partial (\Gamma_{kj}^i \circ x^{-1})}{\partial x_l}\big)(x(p_0)).\]
\end{lemma}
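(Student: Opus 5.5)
The plan is to differentiate the identity of Axiom \ref{pargax} (with $y=x$ the normal coordinate chart) once more with respect to $x_l$, and then evaluate at $x(p_0)$ using Axiom \ref{ax_1'}. By the convention of Remark \ref{der_meaning}, the left-hand side $\frac{\partial^2 (g_{ij}\circ x^{-1})}{\partial x_l \partial x_k}$ is by definition $\frac{\partial}{\partial x_l}$ applied to $\frac{\partial (g_{ij}\circ x^{-1})}{\partial x_k}$. We replace the latter by the right-hand side of Axiom \ref{pargax}:
\[
\sum_{1\leq m \leq n} \big[(g_{mj}\circ x^{-1}) (\Gamma^m_{ki} \circ x^{-1}) + (g_{im} \circ x^{-1}) (\Gamma^m_{kj} \circ x^{-1}) \big].
\]
Applying $\frac{\partial}{\partial x_l}$ term by term requires additivity of the derivative, which is part of the assumed algebraic structure. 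Each product $(g_{mj}\circ x^{-1})(\Gamma^m_{ki}\circ x^{-1})$ is of the form $(f_1 f_2)\circ x^{-1}$, so the Leibniz rule (Axiom \ref{Leirule}) splits it as
\[
\frac{\partial (g_{mj}\circ x^{-1})}{\partial x_l}(\Gamma^m_{ki}\circ x^{-1}) + (g_{mj}\circ x^{-1})\frac{\partial (\Gamma^m_{ki}\circ x^{-1})}{\partial x_l},
\]
and similarly for the second product.

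Next, evaluate at $x(p_0)$; evaluation commutes with sums and products of functions. The terms containing an undifferentiated Christoffel symbol vanish by item 4 of Axiom \ref{ax_1'}. Item 3 kills the derivatives of $g$ as well, although item 4 alone already suffices. In the remaining terms, item 2 gives $g_{mj}(p_0)=\delta^j_m$ and $g_{im}(p_0)=\delta^m_i$. The sums over $m$ then collapse to $\frac{\partial (\Gamma^j_{ki}\circ x^{-1})}{\partial x_l}(x(p_0)) + \frac{\partial (\Gamma^i_{kj}\circ x^{-1})}{\partial x_l}(x(p_0))$, which is exactly the claimed right-hand side.

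The main obstacle is bookkeeping rather than conceptual: justifying in the synthetic setting that differentiation is additive over the finite sum, and that evaluation at $x(p_0)$ commutes with products and sums, so that $(g_{mj}\circ x^{-1})(x(p_0)) = g_{mj}(p_0)$. I would treat these as part of the underlying $\mathbb{R}^M$-algebra structure assumed in Remark \ref{words}. The collapse of the Kronecker-delta sums is then a routine finite-sum rewrite, carried out by induction on $n$ if done in Rocq.
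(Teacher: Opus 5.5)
Your proposal is correct and follows essentially the same route as the paper's proof: instantiate Axiom \ref{pargax} at the normal chart, differentiate with the Leibniz rule (Axiom \ref{Leirule}), evaluate at $x(p_0)$, and collapse the sums over $m$ using item 2 of Axiom \ref{ax_1'}. The only cosmetic difference is that the paper kills the terms $\frac{\partial (g_{mj}\circ x^{-1})}{\partial x_l}(\Gamma^m_{ki}\circ x^{-1})$ and $\frac{\partial (g_{im}\circ x^{-1})}{\partial x_l}(\Gamma^m_{kj}\circ x^{-1})$ with item 3 of Axiom \ref{ax_1'} rather than item 4; as you note, either item suffices.
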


\begin{proof}
Instantiate Axiom \ref{pargax} with normal coordinates $(U_{p_0},x)$ : 
\[
\frac{\partial (g_{ij} \circ x^{-1})}{\partial x_k} = \sum_{1\leq m \leq n} \big[(g_{mj}\circ x^{-1}) (\Gamma^m_{ki} \circ x^{-1}) + (g_{im} \circ x^{-1}) (\Gamma^m_{kj} \circ x^{-1}) \big].
\]

Apply the Leibniz rule and Axiom \ref{ax_1'} :

\begin{equation*}
    \begin{split}
        \frac{\partial^2 (g_{ij} \circ x^{-1})}{\partial x_l \partial x_k} (x(p_0)) &= \sum_{1\leq m \leq n} \frac{\partial}{\partial x_l}\big[(g_{mj}\circ x^{-1}) (\Gamma^m_{ki} \circ x^{-1})  \\
        + & (g_{im} \circ x^{-1}) (\Gamma^m_{kj} \circ x^{-1}) \big] (x(p_0))\\
        =_{ax.\ref{Leirule}} &\sum_{1\leq m \leq n} \big[\frac{\partial (g_{mj} \circ x^{-1})}{\partial x_l} (\Gamma^m_{ki} \circ x^{-1}) \\
        +& (g_{mj} \circ x^{-1}) \frac{\partial (\Gamma^m_{ki} \circ x^{-1})}{\partial x_l}\big] (x(p_0))\\
        +&\sum_{1\leq m \leq n} \big[\frac{\partial (g_{im} \circ x^{-1})}{\partial x_l} (\Gamma^m_{kj} \circ x^{-1})  \\
        + & (g_{im} \circ x^{-1}) \frac{\partial (\Gamma^m_{kj} \circ x^{-1})}{\partial x_l}\big] (x(p_0))\\
        =_{ax.\ref{ax_1'}.3} & \sum_{1\leq m \leq n} \big[(g_{mj} \circ x^{-1}) \frac{\partial (\Gamma^m_{ki} \circ x^{-1})}{\partial x_l} \\
        + & (g_{im} \circ x^{-1}) \frac{\partial (\Gamma^m_{kj} \circ x^{-1})}{\partial x_l}\big] (x(p_0))\\
        =_{ax.\ref{ax_1'}.2} & \big(\frac{\partial (\Gamma_{ki}^j \circ x^{-1})}{\partial x_l}  +\frac{\partial (\Gamma_{kj}^i \circ x^{-1})}{\partial x_l}\big)(x(p_0)).
    \end{split}
\end{equation*}
\end{proof}

\begin{lemma}\label{lemfinal}
For any $(M,g) \in RM(n)$ and $p_0 \in M,$ we have, in normal coordinates $(U_{p_0},x)$, that
    \[\sum_{1\leq k,l \leq n} 3\frac{\partial^2 (g_{ij} \circ x^{-1})}{\partial x_k \partial x_l} (x(p_0))x_k x_l = \sum_{1\leq k,l \leq n} 2 R_{ikjl} (p_0) x_k x_l.\]
\end{lemma}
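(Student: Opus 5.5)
The plan is to reduce both sides to the two quantities
\[
A \coloneqq \sum_{1\leq k,l\leq n} \frac{\partial (\Gamma^j_{ki}\circ x^{-1})}{\partial x_l}(x(p_0))\, x_k x_l, \qquad B \coloneqq \sum_{1\leq k,l\leq n} \frac{\partial (\Gamma^i_{kj}\circ x^{-1})}{\partial x_l}(x(p_0))\, x_k x_l .
\]
I will then show that the left-hand side equals $3(A+B)$, and that $\sum_{k,l} R_{ikjl}(p_0)x_kx_l$ equals both $-3A$ and $-3B$ for the sign conventions of Lemma \ref{Christocurv_lem}. The sign must be tracked carefully: if it comes out as $+3A$ and $+3B$, the claimed identity follows; otherwise a sign discrepancy will show up there.

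Repeatedly I will use the following \emph{relabeling trick}. For any coefficients $a_{kl}$ one has $\sum_{k,l} a_{kl}x_kx_l=\sum_{k,l}a_{lk}x_kx_l$. This is just a renaming of the summation indices together with commutativity of the product $x_kx_l$. Hence only the part of $a_{kl}$ that is symmetric in $k,l$ matters, and no commutation of partial derivatives is ever required.

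\textbf{Left-hand side.} By Lemma \ref{doubderiv}, together with the relabeling trick used to align the ordering convention $\partial x_k\partial x_l$ with $\partial x_l \partial x_k$, the left-hand side is $3(A+B)$.

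\textbf{Right-hand side, first expression.} To produce derivatives of $\Gamma^j$, I rewrite $R_{ikjl}$ so that $j$ sits in the second slot. By Lemma \ref{R_sym}(2), $R_{ikjl}=R_{jlik}$. Then Lemma \ref{R_sym}(1) gives $R_{jlik}=-R_{ljik}$, and Axiom \ref{last2in} gives $-R_{ljik}=R_{ljki}$. Lemma \ref{Christocurv_lem}, with $(k,l,i,j)\mapsto(l,j,k,i)$, then gives
\[
R_{ljki}(p_0)=-\Big(\frac{\partial (\Gamma^j_{ki}\circ x^{-1})}{\partial x_l}+2\frac{\partial(\Gamma^j_{kl}\circ x^{-1})}{\partial x_i}\Big)(x(p_0)).
\]
The term $\partial_i\Gamma^j_{kl}$ is removed using Axiom \ref{Christosum}, which gives $\partial_i\Gamma^j_{kl}=-\partial_k\Gamma^j_{il}-\partial_l\Gamma^j_{ik}$ at $x(p_0)$, with Lemma \ref{symChris} used to reorder lower indices. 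After multiplying by $x_kx_l$ and summing, the relabeling trick identifies both terms with $A$, so this contribution is $-2A$. Hence $\sum_{k,l}R_{ikjl}(p_0)x_kx_l=-(A-4A)=3A$.

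\textbf{Right-hand side, second expression.} I run the same argument with $i$ in the second slot. By Lemma \ref{R_sym}(1), $R_{ikjl}=-R_{kijl}$, and Axiom \ref{last2in} gives $-R_{kijl}=R_{kilj}$. Lemma \ref{Christocurv_lem} gives
\[
R_{kilj}(p_0)=-\big(\partial_k\Gamma^i_{lj}+2\partial_j\Gamma^i_{lk}\big)(x(p_0)).
\]
Eliminating $\partial_j\Gamma^i_{lk}$ via Axiom \ref{Christosum} and relabeling yields $\sum_{k,l} R_{ikjl}(p_0)x_kx_l=3B$.

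\textbf{Conclusion.} Adding the two expressions gives $2\sum_{k,l} R_{ikjl}(p_0)x_kx_l=3A+3B$, which equals the left-hand side.

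The main obstacle is bookkeeping rather than ideas. One must choose the right index permutations in Lemma \ref{R_sym} and Axiom \ref{last2in}, so that the upper index of the Christoffel symbols is $j$ (respectively $i$) and matches Lemma \ref{doubderiv}. One must also apply Axiom \ref{Christosum} with the correct instantiation and check that every residual term becomes $A$ (or $B$) after the symmetric relabeling. I would first verify the instantiation of Lemma \ref{Christocurv_lem} and Axiom \ref{Christosum} index-by-index, since an off-by-permutation error there changes the coefficient $3$.
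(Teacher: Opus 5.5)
Your proof is correct. The instantiations of Lemma~\ref{Christocurv_lem} and Axiom~\ref{Christosum} check out index by index. You get $\sum_{k,l}R_{ikjl}(p_0)x_kx_l=3A$ and also $=3B$, so twice this sum is $3(A+B)$, which is the left-hand side by Lemma~\ref{doubderiv}. One thing to fix: your plan paragraph announces $-3A$ and $-3B$, but the computation correctly gives $+3A$ and $+3B$.

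The overall skeleton is the paper's. It writes $2R_{ikjl}(p_0)=-(R_{kijl}(p_0)+R_{ljik}(p_0))$, which uses the same two rewritings you use: one produces only $\Gamma^j$'s, the other only $\Gamma^i$'s. It then matches the result against Lemma~\ref{doubderiv} after a $k\leftrightarrow l$ relabeling.

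The real difference is your extra application of Axiom~\ref{last2in} in each branch ($-R_{ljik}=R_{ljki}$ and $-R_{kijl}=R_{kilj}$). This moves the derivative on the doubled term onto the free index ($\partial_i\Gamma^j_{kl}$, respectively $\partial_j\Gamma^i_{lk}$). That is exactly why you then need Axiom~\ref{Christosum} a second time to move it back.

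The paper instead applies Lemma~\ref{Christocurv_lem} directly to $R_{ljik}$ and $R_{kijl}$. This gives $-(\partial_l\Gamma^j_{ik}+2\partial_k\Gamma^j_{il})$ and $-(\partial_k\Gamma^i_{jl}+2\partial_l\Gamma^i_{jk})$ at $x(p_0)$. Every derivative index there is already a summed index, so one relabeling turns these into $-3A$ and $-3B$ with no further use of Axiom~\ref{Christosum}.

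Your separate identities $\sum_{k,l}R_{ikjl}(p_0)x_kx_l=3A=3B$ therefore come out of the paper's expressions just as easily; the paper simply adds them rather than stating each one. Your route buys no extra generality. It costs one more use each of Axiom~\ref{last2in} and Axiom~\ref{Christosum} per branch. That matters in this paper's framework, where proofs are equational rewriting chains and the total number of rewriting steps (the depth of the synthetic model) is the quantity being tracked.
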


\begin{proof}
By Axiom \ref{last2in} and Lemma \ref{R_sym}, for any given $k,l$, we can write \[2R_{ikjl}(p_0) = -(R_{kijl}(p_0)+R_{ljik}(p_0)).\] By Lemma \ref{Christocurv_lem}, we have that \[R_{kijl}(p_0) = -\Big(\frac{\partial (\Gamma_{jl}^i \circ x^{-1})}{\partial x_k} +2 \frac{\partial (\Gamma_{jk}^i \circ x^{-1})}{\partial x_l}\Big)(x(p_0))\] and \[R_{ljik}(p_0) = -\Big(\frac{\partial (\Gamma_{ik}^j \circ x^{-1})}{\partial x_l} +2 \frac{\partial (\Gamma_{il}^j \circ x^{-1})}{\partial x_k}\Big)(x(p_0)).\] Hence,

\begin{equation}\label{2R}
\begin{split}
2R_{ikjl}(p_0) &=- \Big[-\Big(\frac{\partial (\Gamma_{jl}^i \circ x^{-1})}{\partial x_k} +2 \frac{\partial (\Gamma_{jk}^i \circ x^{-1})}{\partial x_l}\Big)(x(p_0)) \\
&- \Big(\frac{\partial (\Gamma_{ik}^j \circ x^{-1})}{\partial x_l} +2 \frac{\partial (\Gamma_{il}^j \circ x^{-1})}{\partial x_k}\Big)(x(p_0))\Big]\\
&=\Big(\frac{\partial (\Gamma_{jl}^i \circ x^{-1})}{\partial x_k} +2 \frac{\partial (\Gamma_{jk}^i \circ x^{-1})}{\partial x_l}\Big)(x(p_0))+ \\
&\Big(\frac{\partial (\Gamma_{ik}^j \circ x^{-1})}{\partial x_l} +2 \frac{\partial (\Gamma_{il}^j \circ x^{-1})}{\partial x_k}\Big)(x(p_0)).
\end{split}
\end{equation}

Since we can rename the indices in a summation ($k\leftrightarrow l$), it follows that

\begin{equation}\label{xkxl}
    \begin{split}
        & \sum_{1\leq k,l \leq n} \Big(\frac{\partial (\Gamma_{ik}^j \circ x^{-1})}{\partial x_l} +2 \frac{\partial (\Gamma_{il}^j \circ x^{-1})}{\partial x_k}\Big)(x(p_0)) x_l x_k \\
        &= \sum_{1\leq k,l \leq n} \Big(\frac{\partial (\Gamma_{il}^j \circ x^{-1})}{\partial x_k} +2 \frac{\partial (\Gamma_{ik}^j \circ x^{-1})}{\partial x_l}\Big)(x(p_0)) x_k x_l.
    \end{split}
\end{equation}

Thus, 

\begin{equation*}
\begin{split}
\sum_{1\leq k,l \leq n} 2R_{ikjl}(p_0) x_k x_l &=_{(\ref{2R})}
\sum_{1\leq k,l \leq n}\Big(\frac{\partial (\Gamma_{jl}^i \circ x^{-1})}{\partial x_k} + 
2 \frac{\partial (\Gamma_{jk}^i \circ x^{-1})}{\partial x_l}\Big)(x(p_0)) x_k x_l  \\
&+\sum_{1\leq k,l \leq n} \Big(\frac{\partial (\Gamma_{ik}^j \circ x^{-1})}{\partial x_l} + 2 \frac{\partial (\Gamma_{il}^j \circ x^{-1})}{\partial x_k}\Big)(x(p_0)) x_k x_l \\
&=_{(\ref{xkxl})} \sum_{1\leq k,l \leq n}\Big(\frac{\partial (\Gamma_{jl}^i \circ x^{-1})}{\partial x_k} + 2 \frac{\partial (\Gamma_{jk}^i \circ x^{-1})}{\partial x_l}\Big)(x(p_0)) x_k x_l \\
&+ \sum_{1\leq k,l \leq n} \Big(\frac{\partial (\Gamma_{il}^j \circ x^{-1})}{\partial x_k} + 2 \frac{\partial (\Gamma_{ik}^j \circ x^{-1})}{\partial x_l}\Big)(x(p_0)) x_k x_l \\
&= \sum_{1\leq k,l \leq n}  \Big(\frac{\partial (\Gamma_{jl}^i \circ x^{-1})}{\partial x_k} + \frac{\partial (\Gamma_{il}^j \circ x^{-1})}{\partial x_k}\Big)x_k x_l \\
&+ \sum_{1\leq k,l \leq n} 2\Big(\frac{\partial (\Gamma_{jk}^i \circ x^{-1})}{\partial x_l} (x(p_0)) +\frac{\partial (\Gamma_{ik}^j \circ x^{-1})}{\partial x_l} (x(p_0))\Big)x_k x_l \\
& =_{lem.\ref{symChris}} \sum_{1\leq k,l \leq n}  \Big(\frac{\partial (\Gamma_{lj}^i \circ x^{-1})}{\partial x_k} + \frac{\partial (\Gamma_{li}^j \circ x^{-1})}{\partial x_k}\Big)x_k x_l \\
&+\sum_{1\leq k,l \leq n} 2\Big(\frac{\partial (\Gamma_{kj}^i \circ x^{-1})}{\partial x_l} (x(p_0))+\frac{\partial (\Gamma_{ki}^j \circ x^{-1})}{\partial x_l} (x(p_0))\Big)x_k x_l \\
&=_{lem. \ref{doubderiv}} \sum_{1\leq k,l \leq n}  \Big(\frac{\partial (\Gamma_{lj}^i \circ x^{-1})}{\partial x_k} + \frac{\partial (\Gamma_{li}^j \circ x^{-1})}{\partial x_k}\Big)x_k x_l \\
&+\sum_{1\leq k,l \leq n} 2\frac{\partial^2 (g_{ij}\circ x^{-1})}{\partial x_l x_k} (x(p_0)) x_k x_l \\
&=_{k \leftrightarrow l} \sum_{1\leq k,l \leq n}  \Big(\frac{\partial (\Gamma_{kj}^i \circ x^{-1})}{\partial x_l} + \frac{\partial (\Gamma_{ki}^j \circ x^{-1})}{\partial x_l}\Big)x_l x_k \\
&+ \sum_{1\leq k,l \leq n} 2\frac{\partial^2 (g_{ij}\circ x^{-1})}{\partial x_l x_k} (x(p_0)) x_k x_l   \\
&=_{x_l x_k = x_k x_l} \sum_{1\leq k,l \leq n}  \Big(\frac{\partial (\Gamma_{kj}^i \circ x^{-1})}{\partial x_l}  
+ \frac{\partial (\Gamma_{ki}^j \circ x^{-1})}{\partial x_l}\Big)x_k x_l \\
&+\sum_{1\leq k,l \leq n} 2\frac{\partial^2 (g_{ij}\circ x^{-1})}{\partial x_l x_k} (x(p_0)) x_k x_l
\end{split}
\end{equation*}

\begin{equation*}
    \begin{split}
        &=_{lem. \ref{doubderiv}}\sum_{1\leq k,l \leq n} \frac{\partial^2 (g_{ij}\circ x^{-1})}{\partial x_l x_k} (x(p_0)) x_k x_l+ \sum_{1\leq k,l \leq n} 2\frac{\partial^2 (g_{ij}\circ x^{-1})}{\partial x_l x_k} (x(p_0)) x_k x_l \\
&=\sum_{1\leq k,l \leq n} 3\frac{\partial^2 (g_{ij}\circ x^{-1})}{\partial x_l x_k} (x(p_0)) x_k x_l. 
    \end{split}
\end{equation*}
\end{proof}

We are now able to deduce the proof of the synthetic Taylor theorem for $g.$\\

\noindent
{\it Proof of Theorem \ref{RTHM_SM22}}. By Lemma \ref{lemfinal}, \[\sum_{1\leq k,l \leq n} \frac{1}{2} \frac{\partial^2 (g_{ij} \circ x^{-1})}{\partial x_k \partial x_l} (x(p_0))x_k x_l = \sum_{1\leq k,l \leq n} \frac{1}{3} R_{ikjl} (p_0) x_k x_l.\] So Axiom \ref{ax_2} in normal coordinates $(U_{p_0},x)$ reads (c.f.\ $1 - 3$ of Axiom \ref{ax_1'}),

\begin{equation*}
\begin{split}
g_{ij} \circ x^{-1} &=\delta^j_i+\sum_{1 \leq k,l \leq n} \frac{1}{2}\big(\frac{\partial^2 (g_{ij} \circ x^{-1})}{\partial x_k \partial x_l}(x(p_0))\big)x_k x_l +O(\|x\|^3).
\end{split}
\end{equation*}

From Axiom \ref{last2in}, it follows that

\begin{equation*}
\begin{split}
g_{ij} \circ x^{-1} &=\delta^j_i - \sum_{1\leq k,l \leq n} \frac{1}{3} R_{iklj} (p_0) x_k x_l +O(\|x\|^3).
\end{split}
\end{equation*} \hfill \qedsymbol

\begin{remark}\label{thmB}
The maximal scope of our axioms, i.e.\ the proposed synthetic model of Theorem \ref{RTHM}, is any proposition asserting the equality of tensor fields involving derivatives of a Riemannian metric of second order. Using Definition \ref{coords} that we discuss next, this may be extended to derivatives of any order. 
\end{remark}

We conclude with some further remarks about normal coordinates. 

Define an element of ${\bf RDO}$ called the {\it ring of coordinates} denoted $XG$ generated by $x_1,\ldots , x_n$ and $\partial _1,\ldots , \partial _n$ subject to the conditions that $x_i$ commute among themselves, $\partial _i$ commute among themselves, and $\partial _i(x_j) = \delta^j_i$ (here as usual $\partial _i(x_j)$  means the commutator $\partial _i x_j - x_j \partial _i$). 

\begin{definition}
\label{coords}
A coordinate chart is a morphism 
\[
RG \sqcup _{D} XG \stackrel{b}{\rightarrow} {\mathcal B} \stackrel{\varepsilon}{\rightarrow} {\mathbb R}
\]
to an augmented object of {\bf RDO} with $(\varepsilon \circ b)(x_i)=0$, where $D=k[\partial_1,\dots,\partial_n]$.
We say that a coordinate chart is normal
if the images of 
\[
\sum _{j,k} \Gamma ^i_{jk}x_j x_k
\]
vanish in ${\mathcal B}$, and if the image of $g_{ij}$ by $\varepsilon \circ b$ 
is $\delta^j_i$. 
\end{definition}

One can similarly define a notion of change of coordinates and a {\it tensorial quantity} is one that changes according to the usual rules of changes of coordinates.

\begin{theorem}
\label{normal}
Suppose $A$ is a tensorial quantity, and suppose $A$ vanishes in any normal coordinate chart. Then $A$ vanishes.
\end{theorem}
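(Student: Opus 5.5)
The idea is to reduce Theorem \ref{normal} to the existence of enough normal charts, plus the tensorial transformation law. We read ``$A$ vanishes'' as: the image of $A$ is zero in every coordinate chart $RG \sqcup_D XG \to \mathcal{B} \to \mathbb{R}$ in the sense of Definition \ref{coords}. The proof then has three steps.

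\textbf{Step 1 (a normal chart over any chart).} Fix an arbitrary coordinate chart $(b,\varepsilon)$ with target $\mathcal{B}$. We construct a change of coordinates to a normal chart at the same base point. We do this in two stages.
\begin{itemize}
\item First, a linear change $x \mapsto Px$. We choose $P$ so that $P^{T}(\varepsilon b(g))P = I$. This is Gram--Schmidt applied to the real symmetric matrix $\varepsilon b(g_{ij})$, which must be positive definite, so positivity has to be part of the hypotheses. This achieves $\varepsilon b(g_{ij}) = \delta^j_i$.
\item Second, a polynomial correction $x_i \mapsto x_i + \tfrac12\sum_{j,k}\Gamma^i_{jk}(p_0)x_jx_k + \cdots$. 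Its order-$m$ terms are chosen recursively so that the symmetrized Taylor coefficients of $\sum_{j,k}\Gamma^i_{jk}x_jx_k$ vanish to order $m$.
\end{itemize}
Because $\mathcal{B}$ is only an augmented \textbf{RDO} object, the formal change of variables will have to live in a completion, or in a pushout $\mathcal{B}'$ of $\mathcal{B}$ along $XG \to XG$. We build $\mathcal{B}'$ as a colimit in \textbf{RDO}, which exists since \textbf{RDO} has all colimits. This yields a normal chart $(b',\varepsilon')$ together with a change of coordinates $\phi$ relating it to $(b,\varepsilon)$.

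\textbf{Step 2 (transport by tensoriality).} Since $A$ is tensorial, its image in $(b,\varepsilon)$ is obtained from its image in $(b',\varepsilon')$ by the transformation rule. For a $(p,q)$-tensor this means contracting with $p$ copies of the Jacobian $\partial x'/\partial x$ and $q$ copies of the inverse Jacobian. The rule is linear in the components of $A$, so zero components in the normal chart give zero components in the original chart. We conclude by letting $(b,\varepsilon)$ range over all charts.

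\textbf{Main obstacle.} The hard step is Step 1: producing normal coordinates algebraically inside an arbitrary augmented \textbf{RDO} object, with no geodesic ODE available. There are two difficulties.
\begin{itemize}
\item The recursive solvability at each order. I would argue that the obstruction at order $m$ is a symmetric tensor. It can be killed by adding to $x_i$ the corresponding symmetric polynomial of degree $m+1$, using that $\partial_i$ acting on monomials in $x$ behaves as in $XG$. This is the standard formal proof that normal coordinates exist to any finite order.
\item Convergence, or well-definedness, of the infinite composition. I would sidestep this by noting that $A$, being built from $g$ and finitely many derivatives, only needs normality to a finite order. So a polynomial change of coordinates of sufficiently high degree suffices, and it is invertible formally because its linear part is invertible.
\end{itemize}
If the intended notion of ``vanishes'' is only pointwise under $\varepsilon \circ b$, Step 1 simplifies to the finite-jet version. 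The rest of the argument is unchanged.
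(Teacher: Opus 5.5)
\textbf{There is a genuine gap, and it is in your Step~1.} Definition~\ref{coords} requires the image of $\sum_{j,k}\Gamma^i_{jk}x_jx_k$ to vanish \emph{in $\mathcal{B}$}. That is an exact identity, the geodesic characterization of normal coordinates, not a condition up to some finite order.

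Your recursive polynomial correction only achieves normality to order $m$. The resulting chart is therefore not normal in the sense of the definition, and the hypothesis ``$A$ vanishes in any normal coordinate chart'' says nothing about it. The remark that $A$ involves only finitely many derivatives does not repair this, because the hypothesis is about normal charts, not about jets; the same applies to your pointwise variant. To pass from a finite-order-normal chart to a genuinely normal one with the same jet, you must actually solve the geodesic equation somewhere. An arbitrary augmented object of $\mathbf{RDO}$ gives you nothing to do that with:
\begin{itemize}
\item the new $\partial'_i$ require the inverse Jacobian, which lives neither in $XG$ nor in $\mathcal{B}$;
\item ``$\mathbf{RDO}$ has all colimits'' gives no control over whether the colimit imposing the required relations collapses, whether $\varepsilon$ extends, or whether the image of $A$ survives;
\item passing to a completion would need $\mathcal{B}\to\widehat{\mathcal{B}}$ to be injective on the image of $A$, which nothing guarantees.
\end{itemize}
The positivity issue is another symptom. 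An arbitrary chart may have $\varepsilon b(g)$ indefinite, and then no normal chart is related to it at all. So the statement you set out to prove (vanishing in every chart) is out of reach on your route without changing the hypotheses. Note also that the paper lists a purely formal proof that avoids the geometric construction of normal coordinates as an open question (the item ``On the proof of Theorem~\ref{normal}'' in Section~\ref{4}). That open question is exactly what your Step~1 would have to settle.

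\textbf{The missing idea is to go through honest geometry.} The paper reads ``$A$ vanishes'' as $A=0$ in $RG$ and argues in two steps.
\begin{enumerate}
\item Local Riemannian models $RG\to\mathbf{DO}(U)$ jointly detect elements of $RG$. Every element has a unique normal form $F\cdot\partial_{i_1}\cdots\partial_{i_k}$, with $F$ a polynomial in derivatives of the $g_{ij}$ and $g^{ij}$. Any finite set of values of such derivatives compatible with the universal identities is realized by an actual metric on some $U\subset\mathbb{R}^n$, and shrinking $U$ is harmless.
\item In each such model, classical normal coordinates exist via geodesics. They give a chart that is genuinely normal in the sense of Definition~\ref{coords}. $A$ vanishes there by hypothesis, and then vanishes in the original coordinates by tensoriality.
\end{enumerate}
Your Step~2 is exactly that final transport and is fine. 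What your proposal lacks is the faithfulness step: you avoid it by redefining the conclusion as vanishing in every chart, which is weaker than $A=0$ in $RG$ and follows from it, for indefinite charts too, with no positivity hypothesis. What it also lacks is the source of genuinely normal charts, which in the paper is smooth analysis inside $\mathbf{DO}(U)$ rather than an algebraic construction in an arbitrary $\mathcal{B}$.
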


Suppose $U\subset {\mathbb R}^n$. The algebra ${\bf DO}(U)$ of smooth differential operators on $U$ is an object of ${\bf RDO}$ generated  in order $\leq 1$ 
by the smooth functions on $U$ and the partial derivatives $\partial _i$. If $(g_{ij})$ is a Riemannian metric on $U$, then we obtain a {\it model}
\[
RG \rightarrow {\bf DO}(U).
\]
We call this a {\it local Riemannian model}. \\

\noindent
{\it Proof of Theorem \ref{normal}}. We can give a ``classical'' proof in the following way: we note first that if $A$ vanishes under any local Riemannian model, then it vanishes. This may be seen from the fact that elements of $RG$ can be put in a unique normal form (within $RG$) 
of the form $F\cdot \partial _{i_1} \cdots \partial _{i_k}$ where 
$F\in RG_0$, and furthermore the coefficients $F$ are polynomials in derivatives of the
$g_{ij}$ and $g^{ij}$. The derivatives are subject to identities due to the commutation of the $\partial_i$ and the inverse relation between $g_{ij}$ and $g^{ij}$. 
However, subject to these identities, any combination of values 
of finitely many higher derivatives of the
$g_{ij}$ and $g^{ij}$ may be found in a local Riemannian model. 
Thus, if $A$ vanishes in any local Riemmanian model then it vanishes. Notice also that shrinking $U$ does not affect the validity of this. 

But on the other hand, we have the classical fact in Riemannian geometry that any local Riemannian metric admits locally a system of normal coordinates. These will provide a system of normal coordinates according to our Definition \ref{coords}. If $A$ is tensorial, it means that it transforms in the right way under change of coordinates. Thus, if it vanishes in any 
system of normal coordinates as defined by Definition \ref{coords}, then it will in particular vanish in a local Riemannian model with normal coordinates, and by the coordinate change formula, it vanishes in the original coordinate chart before changing to normal coordinates. $\qed$

\section{Retro-engineering in Rocq}\label{3}

\subsection{Formalization iterations}\label{3.1}

 As mentioned in the introduction we have only partially formalized the final iteration, which is contained in section~\ref{2.1}, i.e.\ we have formalized up to an earlier iteration which is not maximally unpacked. Here we discuss the first and last formalized iterations. The corresponding proof files are attached to the article. But they can also be accessed at \url{https://github.com/GabriellaClemente/differential-geometry/tree/cpp27}.

In the first iteration, we formalized an SM of size and depth equal to $2$. Points on a SLM were defined in the following way. 

\begin{axiom}\label{ax_1}
For any $(M,g) \in RM(n)$ and $p_0 \in M,$ there is a subset of $M$, $U_{p_0} \ni p_0$, and an invertible on its image function $x=(x_1,\dots,x_n) \colon U_{p_0} \to \mathbb{R}^n$ such that

\begin{enumerate}
    \item $x(p_0)=0,$
    \item for all $1 \leq i,j \leq n$, $g_{ij} (p_0) =\delta^j_i,$
    \item for all $1 \leq i,j,k \leq n$, $\frac{\partial (g_{ij} \circ x^{-1})}{\partial x_k} (x(p_0))=0,$ and 
    \item for all $1 \leq i,j,k,l \leq n,$
    \[\frac{1}{2} \frac{\partial^2 (g_{ij} \circ x^{-1})}{\partial x_k \partial x_l} (x(p_0))x_k x_l = -\frac{1}{3} R_{iklj} (p_0) x_k x_l.\]
\end{enumerate}
\end{axiom}

The second axiom was Axiom \ref{2}. Theorem \ref{RTHM_SM22} may now be proved by direct application of Axiom \ref{ax_1} to Axiom \ref{ax_2}. 

The last iteration we formalized makes use of a larger SM. In this case, a point on an SLM is defined by Axiom \ref{ax_11} below and Axiom \ref{Christosum}. 

\begin{axiom}{(Synthetic normal coordinates -- I, last iteration)}\label{ax_11}
For any $(M,g) \in RM(n)$ and $p_0 \in M,$ there is a subset of $M$, $U_{p_0} \ni p_0$, and an invertible on its image function $x=(x_1,\dots,x_n) \colon U_{p_0} \to \mathbb{R}^n$ such that

\begin{enumerate}
    \item $x(p_0)=0,$
    \item for all $1 \leq i,j \leq n$, $g_{ij} (p_0) =\delta^j_i,$ 
    \item for all $1 \leq i,j,k \leq n$, and $\frac{\partial (g_{ij} \circ x^{-1})}{\partial x_k} (x(p_0))=0$. 
\end{enumerate}
\end{axiom}

\subsection{Theoretical considerations}\label{3.2}
Proof assistants provide a formal language in which to write mathematical definitions, theorems and proofs and ensure their mathematical correctness. A proof assistant such as Rocq relies on different components. At the core is a formal logic, a type theory called the Calculus of Inductive Constructions in the case of Rocq, in which only correct definitions, statements and proofs can be expressed. Such a logic is decidable in the sense that the correctness of definitions, statements and proofs can be testified with a terminating algorithm. On top of the internal formal logic, a high level formal language can be found, which allows to phrase definitions, theorems and proofs in a way that is close to how it is done in practice. For instance, the high level formal language supports notations and various forms of implicit information.

In their current state of the art, proof assistants are not able to directly understand mathematics the way mathematicians write it, say in LaTeX, at least when not helped with artificial intelligence tools. Our work comes nowhere near solving this linguistics-informatics problem, however SMs of theorems shine a light on its nature. We believe (up to definitions that yet need to become precise) that\\

\noindent
{\bf Slogan.} The only mathematics that can be {\it formalized} is {\it synthetic}. \\

\section{Future directions}\label{4}

Below are some ideas that could lead to interesting research projects.\\

\noindent
\textbf{1. Computational complexity.} Recall Definition \ref{SM_def} and the concepts introduced in the paragraphs below it. Given an SM of size $m$ and depth $n$, how does $n$ grow with respect to $m$, e.g. is $n = O(m^2)$, or what rate best describes the problem ?\\

\noindent
\textbf{2. Series representations of synthetic $\mathbf{G}$-structures.} Let us quickly review the classical theory of $G$-structures. Most structures on smooth manifolds are $G$-strucures, e.g.\ a Riemannian metric on an $n$-dimensional manifold is such a structure for $G=O(n)$. A $G$-structure determines a compatible with it connection on the tangent bundle. For example, in the $O(n)$ case, the Levi-Civita connection $\nabla$ is compatible with the given Riemannian metric. Let $\Phi_G$ be a $G$-structure defined by a tensor field, and $\nabla_G$ be a compatible connection. We conjecture that $\Phi_G$ has a local presentation in terms of a Taylor polynomial approximation with Taylor coefficients that codify obstructions to integrability. Recall that $\Phi_G$ is integrable if it is locally equivalent to a canonical flat $G$-strucure on $\mathbb{R}^n$, typically denoted as $\Phi^{can}_G$ (e.g.\ $\Phi^{can}_{O(n)}$ is the $n\times n$ identity matrix). Recall as well that the torsion $Tor^{\nabla_G}$, curvature $R^{\nabla_G}$, and covariant derivatives $\nabla^k_G R^{\nabla_G} = 0$ are the obstructions to $\Phi_G$ being integrable. In the Riemannian case, $Tor^{\nabla} = 0$ and the metric is integrable iff $R^{\nabla} = 0.$

Can we use a framework similar to the one developed in this paper to introduce synthetic $G$-structures ? This would involve defining synthetic Lie groups. In view of the lack of topology, we expect for synthetic $G$-structures to exist quite abundantly. It would be interesting to find a generalization of Taylor's formula from Theorem \ref{RTHM_SM22} to such new structures. In order to do that, it would help to reinterpret Taylor's formula in Theorem \ref{RTHM_SM22} as a statement about synthetic $O(n)$-structures. 

\begin{conjecture}{(Taylor's theorem for synthetic $G$-structures)}\label{Gconjecture}
Suppose we had at our disposal good definitions of the synthetic torsion ($Tor^{\nabla_G}$) and synthetic curvature ($R^{\nabla_G}$) associated to a synthetic $G$-structure $\Phi_G$ on a set $M$. For any $p_0 \in M$, there exist a subset $U_{p_0} \subset M$ and a function $x\colon U_{p_0} \to \mathbb{R}$ such that the tensor components of the given structure satisfy, for any integer $k \geq 2$,

\begin{equation*}
    \begin{split}
        (\Phi_G)_{i_1 \dots i_l} \circ x^{-1} &= (\Phi^{can}_G)_{i_1 \dots i_l} + \sum_j a_j \big(Tor^{\nabla_G}(p_0)\big)x_j +\\
        & \sum_{2\leq |\alpha|\leq k} b_{\alpha} \big( (\nabla_G^{|\alpha|-2} R^{\nabla_G})(p_0)\big) x^{\alpha}+\\
        & O(\|x\|^{k+1}),
    \end{split}
\end{equation*}
where $a_j$, $b_{\alpha}$ are constants depending on the values of the torsion and covariant derivatives of the curvature at $p_0$. Note that we are using multi-index notation, so that $|\alpha|=\sum_i \alpha_i$ and $x^{\alpha} = \prod_i x^{\alpha_i}_i$, where $\alpha_i \geq 0$ is an integer.
\end{conjecture}

The goal would be to use this new synthetic theorem to obtain a classical differential geometry version, which is currently an open problem. Obviously some cases are known, notably the Riemannian one (Theorem \ref{RTHM}). There has also been some progress made for almost-complex structures (Proposition 7.1 \cite{Pali}), and almost-hermitian metrics (Lemma 8.2 \cite{Pali}, Theorem 4.8 \cite{JPD}).\\ 

\noindent
\textbf{4.\ On the proof of Theorem \ref{normal}.} Can one give a purely formal or synthetic proof of this theorem without going through the geometric construction of normal coordinates ?\\

\noindent
\textbf{3. Proof automation.} A challenging point of our formalization that was not addressed here is automation. It should be possible to achieve this via a combination of plugins, such as SMT solvers \cite{SMTS}, with LLM-powered versions of Rocq \cite{LLM}. For instance, how can we automate with sums in our proofs ?\\

\noindent
\textbf{5.\ Linguistic aspects of formalization.}
Is it possible to turn the slogan of section \ref{3.2} into a (meta)theorem ?

\section*{Conclusion}

We have demonstrated that it is possible to formalize local Riemannian geometry in Rocq through the example of Theorem \ref{RTHM}, which is about the second degree Taylor expansion of a metric in a normal coordinates. This was done by interpolating this well-known theorem and a not yet existent version of it written in the Calculus of Inductive Constructions, which would be Rocq formalization ready. The interpolation is a synthetic approximation that incorporates the existence of normal coordinates around any point on a Riemannian menifold, a result of global analysis, into an axiomatic system. 

As far as the theory goes, the synthetic approximation was achieved by introducing the notions of {\it $n$-dimensional synthetic local model of a Riemannian manifold} and {\it synthetic model of a theorem}. The latter furnishes a method for synthesizing a theorem by specification of the axioms and proof rules that are allowed. In our case, the synthetic model consists of Theorem \ref{RTHM_SM22}, and axioms that enforce the properties of (classical) normal coordinates that are needed to prove Theorem \ref{RTHM} using only a finite sequence of equational rewriting steps. In addition, we provide an alternative theoretical framework, the {\it category of rings of differential operators}, where less axioms are needed and where ring base change can be easily performed, opening the possibility to do, for example, local Riemannian geometry over a field of finite characteristic. 

In terms of Rocq implementation, we describe a technique for formalizing a synthetic model in stages that gradually unravel the content of axioms into propositions with proof. The formalization and synthetic model are inextricably connected; in fact, Theorem \ref{RTHM_SM22} and its proof were directly influenced by the formalization technique we developed. The crux of this technique is its ability to discern formal and analytical/topological components of a differential geometry result. 

Many interesting questions and new avenues of research have emerged from our work, related to both the synthetic theory and formalization process. Those are discussed in the final section of this article.

\vspace{1cm}

\end{document}